\newcommand{\R}{\mathbb{R}}
\theoremstyle{plain} \newtheorem{theorem}{Theorem}[section]
\newtheorem{lemma}[theorem]{Lemma}
\newtheorem{cor}[theorem]{Corollary}
\newtheorem{prop}[theorem]{Proposition}
\newtheorem{conjecture}[theorem]{Conjecture}
\theoremstyle{definition}
\newtheorem{assumption}[theorem]{Assumption}
\DeclareMathOperator{\supp}{supp} \DeclareMathOperator{\dist}{dist}
\DeclareMathOperator{\diam}{diam}
\newcommand{\eps}{\varepsilon}
\begin{document}

\title[A convolution estimate]{A convolution estimate for
  two-dimensional hypersurfaces}

\author[I. Bejenaru]{Ioan Bejenaru} \address[Ioan Bejenaru]{Department
  of Mathematics\\University of Chicago\\Chicago, IL 60637\\USA}

\email{bejenaru@math.uchicago.edu}

\author[S. Herr]{Sebastian Herr} \address[Sebastian Herr]{Department
  of Mathematics\\University of California\\Berkeley, CA
  94720-3840\\USA} \email{herr@math.berkeley.edu}

\author[D. Tataru]{Daniel Tataru} \address[Daniel Tataru]{Department
  of Mathematics\\University of California\\Berkeley, CA
  94720-3840\\USA} \email{tataru@math.berkeley.edu}

\subjclass[2000]{Primary 42B35; Secondary 47B38}
\keywords{transversality; surface; convolution; $L^2$ estimate; induction on scales}

\begin{abstract}
  Given three transversal and sufficiently regular
  hypersurfaces in $\R^3$ it follows from work of Bennett--Carbery--Wright that the convolution of two
  $L^2$ functions supported of the first and second hypersurface,
  respectively, can be restricted to an $L^2$ function on the third
  hypersurface, which can be considered as a nonlinear version of the Loomis--Whitney inequality.
  We generalize this result to a class of $C^{1,\beta}$ hypersurfaces in $\R^3$, under scaleable assumptions. The
  resulting uniform $L^2$ estimate has applications to
  nonlinear dispersive equations.
\end{abstract}

\maketitle

\section{Setup and main result}\label{sect:main}
\noindent
Given three coordinate hyperplanes $\Sigma_1$, $\Sigma_2$, $\Sigma_3$
in $\R^3$, namely
\begin{equation*}
  \Sigma_1=yz-\text{plane}, \;
  \Sigma_2=xz-\text{plane}, \; \Sigma_3=xy-\text{plane},
\end{equation*}
and smooth functions $f \in L^p(\Sigma_1)$, $g \in L^q(\Sigma_2)$,
consider estimates of the form
\[
\|f\ast g \|_{L^{r'}(\Sigma_3)}\leq C
\|f\|_{L^p(\Sigma_1)}\|g\|_{L^q(\Sigma_2)}.
\]
Since
\begin{equation*}
  (f\ast g) (x,y,z)=\int f(y,z') g(x,z-z') dz'
\end{equation*}
by duality the above estimate is equivalent to
\begin{equation*}
  \left| \int f(y,z)g(x,-z)h(x,y)dxdydz \right| \leq C \|f\|_{L^p(\Sigma_1)}\|g\|_{L^q(\Sigma_2)}\|h\|_{L^{r}(\Sigma_3)}.
\end{equation*}
By H\"older's inequality, we obtain the necessary and sufficient
conditions $p=q=r=2$.  In that case we obtain the bound
\begin{equation}\label{eq:conv_gen_lp}
  \|f\ast g \|_{L^{2}(\Sigma_3)}\leq C \|f\|_{L^2(\Sigma_1)}
  \|g\|_{L^2(\Sigma_2)}.
\end{equation}
which is the classical Loomis-Whitney inequality in three 
space dimensions, see \cite{MR0031538}.

The question which we address here is the following: Does the estimate
\eqref{eq:conv_gen_lp} remain true if $\Sigma_1$, $\Sigma_2$ and
$\Sigma_3$ are bounded subset of transversal, sufficiently smooth,
and oriented surfaces in $\R^3$?

This question has been answered in the affirmative in \cite[Proposition 7]{MR2155223}, along with a quantitative estimate, under the assumption of $C^3$ regularity and a local transversality condition on the surfaces.
In considering this question we are motivated by problems which arise
in the analysis of bilinear $X^{s,b,p}$ estimates in various nonlinear
dispersive equations. Precisely, one can view the estimate
\eqref{eq:conv_gen_lp} as a limiting case of the following bound:
\[
\| f g\|_{X^{0,-\frac12,\infty}_{\Sigma_3}} \leq C
\|f\|_{X^{0,\frac12,1}_{\Sigma_1}} \|g\|_{X^{0,\frac12,1}_{\Sigma_2}}.
\]
Here $f,g$ are assumed to have Fourier support supported in a fixed
unit ball, and the norms $X^{0,\frac12,1}_{\Sigma}$, respectively
$X^{0,-\frac12,\infty}_{\Sigma}$ are defined by
$$
\| f\|_{X^{0,\frac12,1}_{\Sigma}} = \sum_{k = -\infty}^0
2^{\frac{k}{2}}\| \hat{f}(\xi) \|_{L^2(\{ 2^{k} \leq \dist(\xi,\Sigma)
  \leq 2^{k+1}\})},
$$
respectively
$$
\| h\|_{X^{0,-\frac12,\infty}_{\Sigma}} = \sup_{k \leq 0}
2^{-\frac{k}{2}}\| \hat{f}(\xi)\|_{L^2(\{ 2^{k} \leq \dist(\xi,\Sigma)
  \leq 2^{k+1}\})}.
$$
By rescaling, this implies estimates on dyadic frequency scales, in the low modulation region.
Bounds of this type have already appeared --- at least implicitely ---
in the study of bilinear interactions in many semilinear equations
with nontrivial resonance sets, i.e. when bilinear interactions of
solutions to the linear homogeneous equation have an output near the
characteristic set, which in our context means that
$\Sigma_1+\Sigma_2$ has a nontrivial intersection with $\Sigma_3$.

For instance, in the context of Schr\"odinger equations we can mention
\cite{CDKS01}, \cite[Lemma 4.1]{BS08} and \cite{B08}; there the three
surfaces are (pieces of) parabolas. Other examples are the bounds for
the KP-I equation considered in \cite{IKT07}. A large class of
bilinear and multilinear estimates have been systematically studied in
\cite{T01}; however, this does not include the present setup.

In most applications to dispersive equations bilinear estimates are proved in an
ad-hoc manner, taking advantage of the exact form of the surfaces
$\Sigma_1$, $\Sigma_2$ and $\Sigma_3$. In all cases mentioned above the three
surfaces have nonvanishing curvature, and one may ask which (if any)
is the role played by the curvature in general. It is well known that
the nonvanishing curvature plays a fundamental role in the study of
nonlinear dispersive equations, as it insures good decay properties
for the fundamental solution of the corresponding linear equation, as
well as Strichartz and other estimates for solutions to the linear
equation. On the other hand, the bound \eqref{eq:conv_gen_lp} is still
valid when $\Sigma_1$, $\Sigma_2$ and $\Sigma_3$ are transversal planes.
Note that the role of curvature and transversality has been
clarified in \cite{MR2155223} in a much broader context:
Curvature is dispensable for the validity of estimate \eqref{eq:conv_gen_lp}.
However, it still is desireable to gain a better
understanding of the interplay of the necessary size, regularity, transversality and curvature assumptions
on the surfaces under which sharp quantitative and scaleable estimates of the type \eqref{eq:conv_gen_lp} hold true.
Our main motivation are bilinear estimates with applications to the 2d Zakharov system, see \cite{BHHT08}, in which case we need to analyze the interaction between two paraboloids and a cone.

Our precise set-up is the following.
\begin{assumption}\label{ass}
For $i=1,2,3$ there exists $0<\beta\leq 1$, $b>0$, and $\theta>0$ as well as\footnote{We will use the larger surfaces $\Sigma_i^*$ and condition \ref{it:reg_cond} only to ensure the existence of
a global representation of $\Sigma_i$ as a graph over a cube in a suitable frame.
This assumption is likely to be redundant, but we will not pursue these matters here as it is irrelevant for applications.}
$\Sigma_i^*\subset \R^3$, such that $\Sigma_i$ is an open and bounded subset
 of $\Sigma_i^*$ and
\begin{enumerate}
\item\label{it:reg_cond}
the oriented surface $\Sigma_i^*$ is given as
\begin{equation}\label{eq:reg_cond}
\Sigma_i^*=\{\sigma_i \in U_i \mid \Phi_i(\sigma_i)=0, \nabla \Phi_i \not=0, \Phi_i \in C^{1,\beta}(U_i)\},
\end{equation}
for a convex $U_i\subset \R^3$ such that $\dist(\Sigma_i,U_i^c)\geq \diam(\Sigma_i)$;

\item\label{it:hoeldercond}
the unit normal vector field $\mathfrak{n}_i$ on $\Sigma_i^*$ satisfies
the H\"older condition
\begin{equation}\label{eq:hoeldercond}
  \sup_{\sigma,\tilde{\sigma} \in \Sigma_i^*}
  \frac{|\mathfrak{n}_i(\sigma)-\mathfrak{n}_i(\tilde{\sigma})|  }{|\sigma-\tilde{\sigma}|^{\beta}}+  \frac{|\mathfrak{n}_i(\sigma)(\sigma-\tilde{\sigma})|  }{|\sigma-\tilde{\sigma}|^{1+\beta}}\leq b;
\end{equation}
\item\label{it:trans}
the matrix $N(\sigma_1,\sigma_2,\sigma_3)=(\mathfrak{n}_1(\sigma_1),\mathfrak{n}_2(\sigma_2),\mathfrak{n}_3(\sigma_3))$
satisfies the transversality condition
\begin{equation}
  \label{eq:trans}
  \theta \leq\det N(\sigma_1,\sigma_2,\sigma_3) \leq 1
\end{equation}
for all
$(\sigma_1,\sigma_2,\sigma_3)\in \Sigma_1^*\times \Sigma_2^*\times
\Sigma_3^*$.
\end{enumerate}
\end{assumption}

\begin{figure}[ht]
\centering
  \begin{picture}(0,0)%
    \includegraphics[width=11cm]{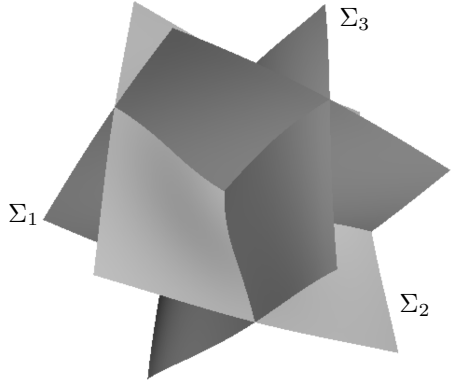}%
  \end{picture}%
  \setlength{\unitlength}{1cm}%
  \begin{picture}(11,6)(0.1,0.1)
    \put(2.6,3){\makebox(0,0)[lb]{\smash{\footnotesize$\Sigma_1$}}}
    \put(7.8,1.8){\makebox(0,0)[lb]{\smash{\footnotesize$\Sigma_2$}}}
    \put(7,5.6){\makebox(0,0)[lb]{\smash{\footnotesize$\Sigma_3$}}}
  \end{picture}
  \caption{\label{fig:surfaces} Three surfaces $\Sigma_1$, $\Sigma_2$,
    and $\Sigma_3$ in a transversal position.}
\end{figure}

We identify $f\in L^{2}(\Sigma_1)=L^{2}(\Sigma_1,\mu_1)$ --- $\mu_1$
being the surface measure on $\Sigma_1$ --- with the distribution
\begin{equation*}
  f(\psi)=\int_{\Sigma_1} f(y) \psi(y) d\mu_1(y) ,\; \psi\in \mathcal{D}(\R^3).
\end{equation*}
For $f\in L^{2}(\Sigma_1),g\in L^{2}(\Sigma_2)$ with compact support
the convolution $f\ast g \in \mathcal{D}'(\R^3)$ is given by
\begin{equation*}
  (f\ast g)(\psi)=\int_{\Sigma_1}\int_{\Sigma_2}f(x)g(y)\psi(x+y)d\mu_1(x)d\mu_2(y)
  , \; \psi \in \mathcal{D}(\R^3).
\end{equation*}

A-priori this convolution is an integrable function which is only
defined almost everywhere, therefore its restriction to $\Sigma_3$ is
not well-defined. To address this issue we begin with $f \in
C_0(\Sigma_1)$ and $g \in C_0(\Sigma_2)$. Then $f \ast g \in
C_0(\R^3)$ and has a well-defined trace on $\Sigma_3$. If
\eqref{eq:conv_gen_lp} is proved in this case, then the trace of $f
\ast g $ on $\Sigma_3$ can be defined by density for arbitrary $f \in
L^{2}(\Sigma_1)$ and $g \in L^{2}(\Sigma_2)$.

Transferring the bound \eqref{eq:conv_gen_lp} from coordinate planes
to the general setting of possibly curved surfaces turns out to
be quite nontrivial.  The reason is that the convolution has an
additive structure with respect to addition in the ambient space
$\R^3$, which is lost when restricting it to curved surfaces. Our
first result is the following.

\begin{theorem}\label{thm:conv}
  Let $\Sigma_1,\Sigma_2,\Sigma_3$ be surfaces
  in $\R^3$ which satisfy Assumption \ref{ass} with parameters $0<\beta\leq 1$, $b=1$ and
  $\theta = \frac12$, and $\diam{\Sigma_i}\leq 1$.  Then for each $f\in L^2(\Sigma_1)$ and
  $g\in L^2(\Sigma_2)$ the restriction of the convolution $f\ast g$ to
  $\Sigma_3$ is a well-defined $L^2(\Sigma_3)$-function which
  satisfies
  \begin{equation}\label{eq:conv}
    \|f\ast g\|_{L^2(\Sigma_3)}\leq C  \|f\|_{L^2(\Sigma_1)}\|g\|_{L^2(\Sigma_2)},
  \end{equation}
  where the constant $C$ depends only on $\beta$.
\end{theorem}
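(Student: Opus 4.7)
I would prove Theorem~\ref{thm:conv} by combining duality with a planar Loomis--Whitney baseline and an induction-on-scales argument. By duality and a density argument using continuous compactly supported data, the desired $L^2$ bound is equivalent to the trilinear form estimate
\begin{equation*}
\Lambda(f,g,h) := \int_{\Sigma_1 \times \Sigma_2} f(\sigma_1) g(\sigma_2) h(\sigma_1+\sigma_2) \, d\mu_1 d\mu_2 \leq C \|f\|_{L^2(\Sigma_1)} \|g\|_{L^2(\Sigma_2)} \|h\|_{L^2(\Sigma_3)}.
\end{equation*}
When the $\Sigma_i$ are subsets of affine planes with unit normals $\mathfrak{n}_i$, a linear change of coordinates diagonalizing $N(\mathfrak{n}_1,\mathfrak{n}_2,\mathfrak{n}_3)$ reduces $\Lambda$ to the coordinate-plane form, and \eqref{eq:conv_gen_lp} gives constant $(\det N)^{-1/2}\leq \sqrt{2}$.

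For the curved case, let $A(r)$ denote the supremum of the trilinear constant over all admissible configurations satisfying Assumption~\ref{ass} with parameters $\beta$, $b=1$, $\theta=\tfrac{1}{2}$, and $\diam \leq r$; the goal is $A(1)\leq C(\beta)$. Rescaling a diameter-$r$ configuration to unit diameter turns the H\"older constant of the normal into $r^{\beta}$, so the regime of small $r$ corresponds to nearly flat surfaces. In this regime, Assumption~\ref{ass}\ref{it:reg_cond} allows me to represent each $\Sigma_i$ as a graph over its tangent plane $P_i$ at a chosen base point, with a graph function $\varphi_i$ satisfying $\|\varphi_i\|_{C^{1,\beta}} = O(r^{\beta})$. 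Pulling $\Lambda$ back to the tangent planes yields a planar trilinear form on $P_1\times P_2\times P_3$, bounded by $\sqrt{2}\|f\|_2\|g\|_2\|h\|_2$ via the flat baseline, plus a perturbation whose operator norm is $O(r^{\beta})$, using the $C^{1,\beta}$ control on $\varphi_i$. This gives a base case $A(r)\leq \sqrt{2}(1+Cr^{\beta})$ for $r\leq r_0$ sufficiently small.

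To pass from the small-$r$ bound to $A(1)$, I would set up a recursion $A(2r)\leq (1+Cr^{\beta/2})A(r)$. At scale $2r$, partition each $\Sigma_i$ into $O(1)$ caps of diameter $\leq r$, decompose $f,g,h$ accordingly, and expand $\Lambda(f,g,h) = \sum_{\alpha,\beta,\gamma} \Lambda(f_\alpha,g_\beta,h_\gamma)$. The transversality condition \eqref{eq:trans} implies that only $O(1)$ cap triples contribute nontrivially, so applying the inductive hypothesis $A(r)$ on each active triple and summing via a trilinear Cauchy--Schwarz produces the recursion---provided the summation incorporates an almost-orthogonality gained from the graph representation of the caps over their individual tangent planes. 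Iterating the recursion from $r_0$ up to $r=1$, the product $\prod_{k=0}^{K}(1+C(2^k r_0)^{\beta/2})$ is bounded by $\exp(C/\beta)$, since the geometric sum $\sum_k (2^k r_0)^{\beta/2}$ is $O(1/\beta)$, giving $A(1)\leq e^{C/\beta}\sqrt{2} = C(\beta)$.

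The main obstacle is achieving the recursion constant $1+O(r^{\beta/2})$ rather than a uniform constant exceeding $1$: a crude cap-decomposition with flat Loomis--Whitney on each cap plus trilinear Cauchy--Schwarz gives only $A(2r)\leq C' A(r)$ with $C'>1$, which blows up under iteration. To secure the sharp form of the recursion, one must show that the piecewise tangent-plane approximation of the $\Sigma_i$---which differs from the true surfaces by $O(r^{1+\beta})$ in Hausdorff distance---is $O(r^{\beta/2})$-close to the true trilinear form in operator norm. This is delicate because the curvature enters the Jacobian of the intersection curve $\{\sigma_1\in\Sigma_1 : \sigma_3-\sigma_1\in\Sigma_2\}$ that implicitly parametrizes the convolution on $\Sigma_3$, and the $\beta$-H\"older control on the normals translates only into $\beta$-H\"older control on this Jacobian, so the error estimate has to be carried out in a way that extracts the additional $\beta/2$ gain from transversality rather than from smoothness.
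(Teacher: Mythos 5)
Your overall architecture (dualize to a trilinear form, use flat Loomis--Whitney as the model case, run an induction on scales) is in the same spirit as the paper's, but both load-bearing steps are missing, and you have flagged one of them yourself as ``the main obstacle'' without resolving it. First, the base case. The claim that for small $r$ the trilinear form is an $O(r^{\beta})$ operator-norm perturbation of the flat Loomis--Whitney form does not follow from $C^{1,\beta}$ closeness of the graphs: after parametrization the two trilinear forms are integrals against mutually singular delta measures carried by two different $3$-manifolds in $\R^{6}$ (the constraint sets $\varphi_1(u)+\varphi_2(v)=\varphi_3(w)$ for the curved versus the flattened surfaces), so there is no ``difference'' to estimate directly; one would need a bi-Lipschitz correspondence between the two constraint manifolds with Jacobian $1+O(r^{\beta})$, and with only $C^{1,\beta}$ data that is essentially the theorem itself. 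The paper sidesteps this entirely by proving the $\eps$-thickened estimate $\|f\ast g\|_{L^2(\Sigma_3(\eps))}\le C\sqrt{\eps}\,\|f\|_{L^2}\|g\|_{L^2}$ (Proposition \ref{prop:eps_thick}); its base case (Lemma \ref{lem:start}) is the regime $r\le \eps^{1/(\beta+1)}$, where the surface is flat at the resolution of the thickening and a direct Cauchy--Schwarz/change-of-variables argument gives constant $1$ with no perturbation theory at all. Your proposal never introduces the thickening, which is also the device that makes the restriction of $f\ast g$ to $\Sigma_3$ well-defined in the limit.

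Second, the recursion. You correctly observe that an $O(1)$-cap decomposition plus trilinear Cauchy--Schwarz only yields $A(2r)\le C'A(r)$ with $C'>1$, and that you need $1+O(r^{\beta/2})$; but you supply no mechanism for the gain, and with $O(1)$ caps per doubling step none is available. The paper's mechanism requires a large scale separation: it passes from scale $R_\beta=R^{(\beta+2)/(2(\beta+1))}$ down to the much smaller scale $R$, slicing each surface into many slabs along the (approximately normal) coordinate directions. The interior pieces then satisfy an \emph{exact} orthogonality, $(f_1|_{\Sigma_1^{k_2,k_3}}\ast f_2|_{\Sigma_2^{k_1,k_3'}}\ast f_3|_{\Sigma_3^{k_1',k_2'}(\eps)})(0)=0$ unless $k_i=k_i'$ --- this is where the additive convolution structure is exploited --- while the slab boundaries $s_k$ are chosen by pigeonhole among $\sim R^{-\beta/2}$ candidate positions so that the $L^2$ mass on the thin boundary strips is at most $\sim R^{\beta/4}$ of the total; that smallness, not transversality, is the source of the factor $1+R^{\beta/4}$. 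Note finally that the paper's induction descends through a number of scales that grows as $\eps\to 0$, which is why the per-step factor must be summably close to $1$; in your upward scheme from a fixed $r_0(\beta)$ to $1$ only $O_\beta(1)$ doublings would be needed, so if your base case were correct the sharp recursion would be unnecessary --- a sign that the genuine gap lies in the base case rather than in the recursion constant.
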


As mentioned above, in the case where the surfaces are of class $C^{3}$ and a local transversality condition is satisfied a variant of this theorem has been obtained earlier with a different proof. More precisely, estimate \eqref{eq:conv} can be derived from \cite[Theorem 1]{MR2155223} as in the proof of \cite[Proposition 7 or Theorem 8]{MR2155223}.

For multilinear estimates with applications to nonlinear dispersive equations it is necessary to make explicit how $C$ depends on the diameter of the surfaces and on $\theta$, $b$ and $\beta$. The subsequent Corollaries \ref{cor:bt}, \ref{cor:conv_gen} and in particular \ref{cor:conv_theta} are quantitative refinements of the above Theorem which --- to the best of our knowledge --- are not available in the literature.

The proof of Theorem \ref{thm:conv} presented here, which merely uses $C^{1,\beta}$ regularity, is based on the induction on scales argument \'a la Bourgain, Wolff, Tao seems to be more robust compared to the proof given in \cite{MR2155223} in the sense that it does not require the normals to be Lipschitz. On the other hand, the induction on scales machinery has been implemented in \cite{MR2275834} in a more general context, but the results of \cite{MR2275834} imply our results only up to a small loss in the induction on scales procedure, see also \cite[Remark 6.3]{MR2275834}. Note that the homogeneous regularity assumption \eqref{eq:hoeldercond} allows us to take advantage of the isotropic scaling, which is essential for the derivation of the subsequent Corollaries.

The result \eqref{eq:conv} can be viewed as a weaker form of the 
three dimensional multilinear restriction conjecture, see \cite{MR2275834}. 
Denoting
\[
\mathcal E_j f_j = \int_{\Sigma_j} e^{i x\xi} f_j(\xi) d \xi,
\qquad f_j \in L^1(\Sigma_j)
\]
we have 

\begin{conjecture}\cite{MR2275834} Assume that $\Sigma_1$, $\Sigma_2$ and
  $\Sigma_3$ satisfy the transversality condition \ref{it:trans} above.
Then 
\begin{equation}
\| \mathcal E_1 f_1 \mathcal E_2 f_2 \mathcal E_3 f_3 \|_{L^1}
\leq C \|f_1\|_{L^2(\Sigma_1)} \|f_2\|_{L^2(\Sigma_2)} 
\|f_3\|_{L^2(\Sigma_3)} 
\label{multrest}\end{equation}
\end{conjecture}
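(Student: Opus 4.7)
The plan is to establish the conjecture by combining the wave-packet / induction-on-scales scheme of Bennett--Carbery--Tao with a sharp multilinear Kakeya bound, using Theorem \ref{thm:conv} to absorb the losses that typically appear in the reduction. By scaling, duality against $L^\infty$, and a standard localization, the global $L^1$ estimate is equivalent to the uniform local estimate
\[
\bigl\|\mathcal E_1 f_1\,\mathcal E_2 f_2\,\mathcal E_3 f_3\bigr\|_{L^1(B_R)}\leq C\prod_{j=1}^3\|f_j\|_{L^2(\Sigma_j)}
\]
on every Euclidean ball $B_R$ of radius $R\geq 1$, with $C$ independent of $R$; sending $R\to\infty$ then yields the conjecture.

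At scale $R$ I would decompose each $\Sigma_j$ into caps $\tau$ of diameter $R^{-1/2}$, writing $f_j=\sum_\tau f_{j,\tau}$. By the uncertainty principle each wave packet $\mathcal E_j f_{j,\tau}$ is essentially concentrated, up to rapidly decaying tails, on a tube $T_\tau$ of dimensions $R^{1/2}\times R^{1/2}\times R$ oriented along $\mathfrak n_j(\tau)$. By the transversality condition \ref{it:trans}, the three tube families $\mathcal T_1,\mathcal T_2,\mathcal T_3$ are quantitatively transverse at angle at least $\theta$, so Guth's endpoint multilinear Kakeya inequality supplies the Kakeya majorization
\[
\int_{B_R}\prod_{j=1}^3\Bigl(\sum_{T\in\mathcal T_j}c_{j,T}\chi_T\Bigr)^{1/2}dx\leq C(\theta)\, R\prod_{j=1}^3\Bigl(\sum_{T\in\mathcal T_j}c_{j,T}\Bigr)^{1/2},
\]
with $c_{j,T_\tau}=\|f_{j,\tau}\|_{L^2}^2$. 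Together with the almost-orthogonality $\sum_\tau\|f_{j,\tau}\|_{L^2}^2\sim\|f_j\|_{L^2}^2$ and standard wave-packet summation this controls the diagonal contribution with the desired $R$-independent constant.

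The main obstacle is the wave-packet-to-Kakeya reduction, in which the pointwise bound $|\mathcal E_j f_j|^2\leq C\sum_\tau \|f_{j,\tau}\|_{L^2}^2\chi_{T_\tau}$ fails because of the cross terms $\mathcal E_j f_{j,\tau}\overline{\mathcal E_j f_{j,\tau'}}$ with $\tau\neq\tau'$; handling them classically costs an $R^\varepsilon$ loss and is precisely the point at which \cite{MR2275834} stops. Here I would invoke Theorem \ref{thm:conv} at the cap scale: the sharp $L^2$ convolution bound provides exactly the strength needed to estimate the interaction on the resonant set $(\Sigma_1+\Sigma_2)\cap\Sigma_3$ without any $R^\varepsilon$ loss, thereby turning the induction on scales into a self-improving scheme that closes on a constant depending only on $\theta$. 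Making this last step rigorous---in particular, showing that the off-diagonal wave-packet interactions really are controlled by Theorem \ref{thm:conv} rather than only by the cruder Kakeya majorant, uniformly in $R$---is where any honest proof of the conjecture would require substantial new input beyond the methods of the present paper.
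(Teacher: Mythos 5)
The statement you are addressing is labelled a \emph{conjecture} in the paper, and the paper gives no proof of it: it only records that the weaker estimate \eqref{multresta}, with an $R^\epsilon$ loss on a ball of radius $R$, is established in \cite{MR2275834}, and it explicitly describes Theorem \ref{thm:conv} as a \emph{weaker form} of \eqref{multrest}, i.e.\ a consequence one would expect of the conjecture, not a tool strong enough to prove it. So your proposal must stand on its own, and it does not. The opening steps (cap decomposition at scale $R^{-1/2}$, wave packets on $R^{1/2}\times R^{1/2}\times R$ tubes, transversality of the three tube families, endpoint multilinear Kakeya for the diagonal square function) are the standard Bennett--Carbery--Tao reduction and are unobjectionable. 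But the entire difficulty of the conjecture is concentrated in the step you postpone to your final paragraph: replacing $|\mathcal E_j f_j|^2$ by the Kakeya majorant $\sum_\tau\|f_{j,\tau}\|_{L^2}^2\chi_{T_\tau}$ discards the cross terms $\mathcal E_j f_{j,\tau}\overline{\mathcal E_j f_{j,\tau'}}$, and the recursive treatment of exactly these terms is what produces the $R^\epsilon$ in \cite{MR2275834}. Your assertion that Theorem \ref{thm:conv} ``provides exactly the strength needed'' to remove this loss comes with no mechanism, and the logical direction is backwards: the convolution bound \eqref{eq:conv} (and its thickened version \eqref{eq:eps_thick}) controls only the resonant, single-modulation part of the trilinear interaction --- the trace of $f_1\ast f_2$ on $\Sigma_3$, or equivalently one dyadic distance scale $\dist(\xi,\Sigma_3)\sim 2^k$ --- whereas $\|\mathcal E_1f_1\,\mathcal E_2f_2\,\mathcal E_3f_3\|_{L^1(B_R)}$ requires summing over all $O(\log R)$ modulation scales $R^{-1}\leq 2^k\leq 1$. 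The paper itself presents \eqref{eq:conv} as the endpoint $X^{0,\pm\frac12,1/\infty}$ bilinear estimate at a single scale; it is precisely the summation over scales that no known argument performs without loss.

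By your own admission in the closing sentence, what you have written is a program with the decisive step missing, not a proof. Conjecture \eqref{multrest} was open when this paper was written and is not resolved by combining its results with Guth's endpoint multilinear Kakeya theorem; that combination is known to remove the $R^\epsilon$ only at the Kakeya level, not at the extension level. A genuine proof along your lines would have to exhibit a concrete self-improving inequality in which the gain per induction step strictly dominates the constant lost per step for the \emph{full} triple product, including all off-diagonal wave-packet interactions, and Theorem \ref{thm:conv} does not supply such an inequality.
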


With an $\epsilon$ loss this is proved in \cite{MR2275834},
\begin{equation}
\| \mathcal E_1 f_1 \mathcal E_2 f_2 \mathcal E_3 f_3 \|_{L^1(B(0,R))}
\leq C R^\epsilon \|f_1\|_{L^2(\Sigma_1)} \|f_2\|_{L^2(\Sigma_2)} 
\|f_3\|_{L^2(\Sigma_3)}.
\label{multresta}\end{equation}

Another generalization of the Loomis-Whitney inequality is given
in \cite{MR2155223}. In the context of the above restriction conjecture,
the results in \cite{MR2155223} imply that
\[
\| \mathcal E_1 f_1 \mathcal E_2 f_2 \mathcal E_3 f_3 \|_{L^2}
\leq C \|f_1\|_{L^{4/3}(\Sigma_1)} \|f_2\|_{L^{4/3}(\Sigma_2)} 
\|f_3\|_{L^{4/3}(\Sigma_3)} 
\]
which would follow from \eqref{multrest} by multilinear interpolation
with the trivial $L^\infty$ bound for the product.

We conclude the section with a discussion of further versions of our
main result.  Partitioning the three surfaces into smaller pieces and
using linear changes of coordinates it is easy to allow arbitrary
values for the parameters in the hypothesis of the theorem:

 \begin{cor}\label{cor:bt}
   Let $\Sigma_1,\Sigma_2,\Sigma_3$ be surfaces
   in $\R^3$ which satisfy Assumption \ref{ass} with parameters $0<\beta\leq 1$, $b>0$, $\theta>0$,
   and $\diam{\Sigma_i}\leq R$.  Then for each $f\in
   L^2(\Sigma_1)$ and $g\in L^2(\Sigma_2)$ the restriction of the
   convolution $f\ast g$ to $\Sigma_3$ is a well-defined
   $L^2(\Sigma_3)$-function which satisfies
   \begin{equation}\label{eq:bt}
     \|f\ast g\|_{L^2(\Sigma_3)}\leq C(R^\beta b,\theta)  \|f\|_{L^2(\Sigma_1)}\|g\|_{L^2(\Sigma_2)}.
   \end{equation}
 \end{cor}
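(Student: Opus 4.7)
I will derive Corollary~\ref{cor:bt} from Theorem~\ref{thm:conv} by two standard operations: a global dilation that normalizes the diameter, followed by a partition combined with a linear change of coordinates on each triple of pieces that normalizes $b$ and $\theta$.

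\emph{Step 1 (Scaling).} Apply the homothety $\sigma \mapsto R^{-1}\sigma$ to all three surfaces. Unit normals are invariant under dilation, so $\theta$ is preserved; the H\"older seminorm in \eqref{eq:hoeldercond} is multiplied by $R^\beta$; the diameter is reduced to at most $1$. A direct computation with the companion rescaling of functions $f^\lambda(\lambda\sigma) := f(\sigma)$ shows that both sides of \eqref{eq:bt} transform homogeneously by the same power of $\lambda$, so the inequality is invariant under the dilation with the same constant. It therefore suffices to prove the bound for surfaces of diameter at most $1$ with parameters $(\beta, \tilde b, \theta)$, where $\tilde b := R^\beta b$.

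\emph{Step 2 (Partition and affine normalization).} Partition each $\Sigma_i$ into $N \lesssim c^{-2}$ pieces $\Sigma_i^{(k)}$ of diameter at most $c$, where $c = c(\beta, \tilde b, \theta)$ is chosen small enough that $\tilde b c^\beta$ is a sufficiently small multiple of $\theta^{1+\beta}$. By \eqref{eq:hoeldercond}, on each piece the unit normal deviates from its central value $n_i^{(k)}$ by at most $\tilde b c^\beta$. For every active triple $(k_1,k_2,k_3)$, let $N_0 := [\,n_1^{(k_1)} \mid n_2^{(k_2)} \mid n_3^{(k_3)}\,]$; by \eqref{eq:trans} combined with the smallness of the normal variation, $|\det N_0| \geq \theta/2$, so $T := N_0^T$ is invertible with $\|T\|, \|T^{-1}\| \lesssim \theta^{-1}$. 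Applying $T$ sends the three reference normals to the standard basis $e_1, e_2, e_3$. Composing $T$ with a further dilation, I can arrange simultaneously that the image of each piece has diameter $\leq 1$, H\"older constant $\leq 1$ (possible precisely because $c^\beta \lesssim \theta^{1+\beta}/\tilde b$), and transversality $\geq 1/2$ (because the remaining normal variation is a small constant). Theorem~\ref{thm:conv} then applies on each transformed triple, giving --- with $f_{k_1} := f\,\chi_{\Sigma_1^{(k_1)}}$ and $g_{k_2}$ defined analogously ---
\[
\|f_{k_1}\ast g_{k_2}\|_{L^2(\Sigma_3^{(k_3)})} \leq C_\beta \,\|f_{k_1}\|_{L^2(\Sigma_1)} \|g_{k_2}\|_{L^2(\Sigma_2)}.
\]

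\emph{Step 3 (Orthogonal assembly).} Expanding $f\ast g = \sum_{k_1,k_2} f_{k_1}\ast g_{k_2}$, the contribution to $\Sigma_3^{(k_3)}$ comes only from pairs in
\[
S_{k_3} := \{(k_1,k_2) \mid (\Sigma_1^{(k_1)} + \Sigma_2^{(k_2)}) \cap \Sigma_3^{(k_3)} \neq \emptyset\}.
\]
For fixed $\sigma_3 \in \Sigma_3^{(k_3)}$, transversality implies that the solution set $\{(\sigma_1,\sigma_2)\in\Sigma_1\times\Sigma_2 : \sigma_1+\sigma_2=\sigma_3\}$ is a smooth curve of length $O(1)$ in $\Sigma_1\times\Sigma_2$; since it crosses $\lesssim c^{-1}$ of the $c$-cells, one obtains $|S_{k_3}| \lesssim c^{-1}$ uniformly in $k_3$. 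Conversely, each pair $(k_1,k_2)$ lies in $S_{k_3}$ for only $O(1)$ values of $k_3$, since $\diam(\Sigma_1^{(k_1)} + \Sigma_2^{(k_2)}) \leq 2c$. Applying Cauchy--Schwarz to the inner sum using $|S_{k_3}| \lesssim c^{-1}$ together with the single-triple estimate from Step~2 yields
\[
\|f\ast g\|_{L^2(\Sigma_3)}^2 \lesssim c^{-1}\,C_\beta^2 \sum_{k_1,k_2} \|f_{k_1}\|_{L^2}^2 \|g_{k_2}\|_{L^2}^2 = c^{-1}\,C_\beta^2 \|f\|_{L^2}^2 \|g\|_{L^2}^2,
\]
and since $c^{-1}$ depends only on $R^\beta b$, $\theta$, and $\beta$, the claim follows.

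\emph{Main obstacle.} The most delicate part is Step~2: one must simultaneously bring the diameter to $\leq 1$, the H\"older constant to $\leq 1$, and the transversality determinant to $\geq 1/2$ via the composition of a linear map (condition number controlled by $\theta^{-1}$) and a dilation, and track how the three parameters interact --- in particular verifying that the enlarged surface $\Sigma_i^{*}$ required by Assumption~\ref{ass} can be supplied on each transformed piece. The geometric counting $|S_{k_3}| \lesssim c^{-1}$ in Step~3 is the other essential ingredient, relying on extracting a uniform arclength estimate for the solution curve from the lower bound $\theta$ on the Jacobian.
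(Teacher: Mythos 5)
Your argument is correct and follows exactly the route the paper sketches for this corollary (partition into pieces on which the normals are nearly constant, normalize by a linear map as in the proofs of Corollaries~\ref{cor:conv_gen} and~\ref{cor:conv_theta}, apply Theorem~\ref{thm:conv}, reassemble); since the constant is allowed to be an unspecified function of $R^\beta b$ and $\theta$, even a crude triangle-inequality reassembly over all $O(c^{-4})$ triples would suffice, so the orthogonality count in your Step~3 is more care than is needed. The only slip is that the single-triple bound in Step~2 picks up Jacobian factors from undoing $T$ and the dilation, so its constant is $C(\beta,\theta)$ rather than $C_\beta$ --- harmless for the stated conclusion.
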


 Here the expression $R^\beta b$ appears due to isotropic scaling.
 While this is easy to prove, it is not so useful due to the
 unspecified dependence of the constant $C$ on $R^\beta b$ and
 $\theta$.  A better result is contained in the next Corollary, which
 considers the case of three surfaces which can be placed into the
 context of Theorem~\ref{thm:conv} via a linear transformation.

\begin{cor}\label{cor:conv_gen}
  Assume that $\Sigma_1$, $\Sigma_2$,
  $\Sigma_3$ satisfy the conditions of Theorem \ref{thm:conv}.  Let
  $T:\R^3\to \R^3$ be an invertible, linear map and
  $\Sigma_i'=T\Sigma_i$. Then for functions $f'\in L^2(\Sigma_1')$ and
  $g'\in L^2(\Sigma_2')$ the restriction of the convolution $f'\ast
  g'$ to $\Sigma_3'$ is a well-defined $L^2(\Sigma_3')$-function which
  satisfies
  \begin{equation}\label{eq:conv_gen}
    \|f'\ast g'\|_{L^2(\Sigma_3')}\leq \frac{C}{\sqrt{d}} \|f'\|_{L^2(\Sigma_1')}\|g'\|_{L^2(\Sigma_2')},
  \end{equation}
  where
  \begin{equation*}
    d=\inf_{\sigma_1',\sigma_2',\sigma_3'}|\det N'(\sigma_1',\sigma_2',\sigma_3')|
  \end{equation*}
  and $N'(\sigma_1',\sigma_2',\sigma_3')$ is the matrix of the unit
  normals to $\Sigma_i'$ at $(\sigma_1',\sigma_2',\sigma_3')$ and $C$ depends only on $\beta$.
\end{cor}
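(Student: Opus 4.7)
My plan is to reduce the estimate on $\Sigma'_i=T\Sigma_i$ to Theorem~\ref{thm:conv} applied to the original surfaces, via an affine change of variables, and then absorb the resulting Jacobians through a purely algebraic identity relating $\det N$ and $\det N'$. By density I work with $f',g',h'$ continuous and compactly supported on $\Sigma_1',\Sigma_2',\Sigma_3'$; by duality, \eqref{eq:conv_gen} is equivalent to the trilinear bound
\[
|\Lambda'(f',g',h')| \leq \frac{C}{\sqrt{d}}\,\|f'\|_{L^2(\Sigma_1')}\|g'\|_{L^2(\Sigma_2')}\|h'\|_{L^2(\Sigma_3')},
\]
where $\Lambda'(f',g',h')=\int\delta_{\R^3}(x'+y'-z')f'(x')g'(y')h'(z')\,d\mu_1'(x')d\mu_2'(y')d\mu_3'(z')$.

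Next I would perform the substitution $x'=Tx$, $y'=Ty$, $z'=Tz$. The ambient delta transforms by $\delta_{\R^3}(T\cdot)=|\det T|^{-1}\delta_{\R^3}(\cdot)$, and the surface measures transform by $d\mu_i'(Tx)=J_i(x)\,d\mu_i(x)$, where $J_i(x)=|\det T|\cdot|(T^{-1})^t\mathfrak{n}_i(x)|$; this comes from the identity $Tv_1\times Tv_2=(\det T)(T^{-1})^t(v_1\times v_2)$ applied to a tangent frame at $x$. To preserve $L^2$ norms I would introduce the symmetric splitting $F(x)=f'(Tx)\sqrt{J_1(x)}$, and $G$, $H$ defined analogously on $\Sigma_2,\Sigma_3$, so that $\|F\|_{L^2(\Sigma_1)}=\|f'\|_{L^2(\Sigma_1')}$ and likewise for $G$, $H$. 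The trilinear form then becomes
\[
\Lambda'(f',g',h')=\frac{1}{|\det T|}\int \delta_{\R^3}(x+y-z)\,F(x)G(y)H(z)\sqrt{J_1(x)J_2(y)J_3(z)}\, d\mu_1 d\mu_2 d\mu_3,
\]
at which point I would pull $\sup\sqrt{J_1J_2J_3}$ out of the integral and invoke Theorem~\ref{thm:conv} on the remaining expression (note that the hypotheses apply to the $\Sigma_i$ by assumption), producing a factor of $C=C(\beta)$.

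The last ingredient is the identity
\[
J_1(x)J_2(y)J_3(z)=|\det T|^2\cdot\frac{|\det N(\sigma_1,\sigma_2,\sigma_3)|}{|\det N'(\sigma_1',\sigma_2',\sigma_3')|},
\]
which I would derive by combining $J_i=|\det T|\cdot|(T^{-1})^t\mathfrak{n}_i|$ with $\mathfrak{n}_i'(\sigma_i')=\pm(T^{-1})^t\mathfrak{n}_i(\sigma_i)/|(T^{-1})^t\mathfrak{n}_i(\sigma_i)|$ and the multiplicativity $\det((T^{-1})^tv_1,(T^{-1})^tv_2,(T^{-1})^tv_3)=(\det T)^{-1}\det(v_1,v_2,v_3)$. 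Since $\Sigma_1,\Sigma_2,\Sigma_3$ satisfy the hypotheses of Theorem~\ref{thm:conv}, we have $|\det N|\leq 1$ by Assumption~\ref{ass}(iii); by the definition of $d$ we have $|\det N'|\geq d$. Therefore $\sup\sqrt{J_1J_2J_3}\leq|\det T|/\sqrt{d}$, and the two factors of $|\det T|$ cancel precisely, leaving the constant $C/\sqrt{d}$.

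The argument is structurally straightforward; there is no real obstacle beyond correctly tracking how the ambient Lebesgue factor and the three surface-measure factors conspire, and recognizing that the key ratio $|\det N|/|\det N'|$ is exactly the object controlled by the hypotheses. The isotropy of the Hölder condition \eqref{eq:hoeldercond} is not directly used here, since $T$ is applied only to $\Sigma_i$, but it is what makes the reduction clean at the level of constants.
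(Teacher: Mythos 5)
Your proposal is correct and follows essentially the same route as the paper: dualize to a trilinear form with an ambient delta, change variables by $T$, track the surface-measure Jacobians $J_i=|\det T|\,|(T^{-1})^{\top}\mathfrak n_i|$ (which are exactly the ratios $|m_i'|/|m_i|$ of induced normals in the paper's parametrized formulation), and use the identity $J_1J_2J_3=|\det T|^2|\det N|/|\det N'|$ together with $|\det N|\leq 1$ and $|\det N'|\geq d$ to cancel the $|\det T|$ factors. The only cosmetic difference is that the paper writes everything through explicit graph parametrizations and absorbs $\det N\sim 1$ into the constant, whereas you argue intrinsically with surface measures; both are the same computation.
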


We remark that the linear transformation $T$ does not explicitely
appear in the estimate \eqref{eq:conv_gen}. Instead, the size
$\frac{1}{\sqrt{d}}$ of the constant is determined only by the
transversality properties of the surfaces
$\Sigma_1',\Sigma_2',\Sigma_3'$. Hence the best way to interpret the
result in the Corollary is to say that the bound \eqref{eq:conv_gen}
for the surfaces $\Sigma_1',\Sigma_2',\Sigma_3'$ holds whenever these
surfaces are bounded, $C^{1,\beta}$ regular and uniformly transversal
with respect to some linear frame.

Finally, let us state an explicit condition which guarantees that the
assumptions of Corollary \ref{cor:conv_gen} are satisfied:

\begin{cor}\label{cor:conv_theta}
  Let $\Sigma_1,\Sigma_2,\Sigma_3$ be
  surfaces in $\R^3$ which satisfy Assumption \ref{ass} with parameters $0<\beta \leq 1$, $b>0$, $\theta>0$,
  and $\diam{\Sigma_i}\leq R$, so that
  \begin{equation}
    R^\beta b \leq    \theta.
    \label{rbt}\end{equation}
  Then for each $f\in L^2(\Sigma_1)$ and $g\in L^2(\Sigma_2)$ the
  restriction of the convolution $f\ast g$ to $\Sigma_3$ is a
  well-defined $L^2(\Sigma_3)$-function which satisfies
  \begin{equation}\label{eq:conv_theta}
    \|f\ast g\|_{L^2(\Sigma_3)}\leq  \frac{C}{\sqrt{\theta}}
    \|f\|_{L^2(\Sigma_1)}\|g\|_{L^2(\Sigma_2)},
  \end{equation}
where $C$ depends only on $\beta$.
\end{cor}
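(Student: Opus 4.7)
The plan is to reduce Corollary~\ref{cor:conv_theta} to Corollary~\ref{cor:conv_gen} via an explicit linear change of coordinates that orthogonalizes the normal frame at a reference point.

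Pick $\sigma_i^0 \in \Sigma_i$, let $\mathfrak{n}_i^0 = \mathfrak{n}_i(\sigma_i^0)$, and form the $3\times 3$ matrix $M$ with columns $\mathfrak{n}_i^0$. Assumption~\ref{ass}.\ref{it:trans} gives $|\det M|\geq \theta$; combined with the unit-length of the columns, this forces $\|M\|\leq \sqrt 3$ and $\|M^{-1}\|\leq 3/\theta$. Consider $T = \sqrt 3\, R \, M^{-T}$, so that $T^{-1} = M^T/(\sqrt 3\, R)$, and set $\tilde\Sigma_i := T^{-1}\Sigma_i$. By construction $\diam(\tilde\Sigma_i)\leq 1$, and the unit normal of $\tilde\Sigma_i$ at the image of $\sigma_i^0$ equals $e_i$. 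The Hölder condition \eqref{eq:hoeldercond} together with the hypothesis $R^\beta b\leq\theta$ controls how both the deviation of the transformed normals of $\tilde\Sigma_i$ from $e_i$ and their Hölder constant behave: a direct calculation yields a deviation bounded in terms of $R^\beta b/\theta$ and a new Hölder constant of order $R^\beta b/\theta^{1+\beta}$, up to factors depending only on $\beta$.

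If this calculation produces $\tilde\Sigma_i$ satisfying the hypotheses of Theorem~\ref{thm:conv} (i.e.\ diameter $\leq 1$, $\tilde b\leq 1$, $\tilde\theta\geq 1/2$), then Corollary~\ref{cor:conv_gen} applied to the map $T$ yields the convolution estimate for $\Sigma_i = T\tilde\Sigma_i$ with constant $C/\sqrt d$, where $d=\inf|\det N|\geq\theta$ by Assumption~\ref{ass}.\ref{it:trans}. This is the claimed bound $C/\sqrt\theta$.

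The main obstacle is verifying that a single linear change of variables produces $\tilde\Sigma_i$ satisfying all of Theorem~\ref{thm:conv}'s hypotheses: the natural estimate for the transformed Hölder constant is of order $1/\theta^\beta$, which is $\leq 1$ only when $\theta$ is bounded below by a $\beta$-dependent constant. In that favorable regime the single-transformation argument goes through directly. In the general case, one first subdivides each $\Sigma_i$ into pieces of diameter $r$ chosen so that $r^\beta b$ is a sufficiently small multiple of $\theta^{1+\beta}$, applies the linear-change argument patch by patch to obtain piecewise convolution estimates with constant $C/\sqrt\theta$, and recombines them via an $L^2$-orthogonality argument that exploits how the transversality condition localizes the interactions among triples of patches to a combinatorial multiplicity depending only on $\beta$.
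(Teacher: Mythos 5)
Your overall strategy --- normalize the normal frame at a reference triple by a linear map and invoke Corollary~\ref{cor:conv_gen} --- is also the paper's strategy, and you correctly diagnose that the isotropic choice $T=\sqrt3\,R\,M^{-T}$ only yields a transformed H\"older constant of order $\theta^{-1-\beta}R^\beta b$. But your fix has a genuine gap, and it stems from missing the one anisotropic ingredient the paper uses. The second term in \eqref{eq:hoeldercond} gives $|(\sigma_i-\sigma_i^0)\cdot\mathfrak{n}_i^0|\leq bR^{1+\beta}\lesssim R\theta$ (after a preliminary partition strengthening \eqref{rbt} by a fixed factor), i.e.\ each $\Sigma_i$ is already \emph{thin at scale $R\theta$ in its own normal direction}. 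The paper therefore decomposes each $\Sigma_i$ only into slabs of width $\sim R\theta$ in the two \emph{other} directions $\mathfrak{n}_j^0$, $j\neq i$, and then applies the anisotropic map $T=R\theta\,(N^\top)^{-1}$: this shrinks by $1/(R\theta)$ in all three normal directions, so the localized pieces land in sets of diameter $\leq 1$, while $\|T\|\lesssim R$ because the factors of $\theta$ cancel against $\|N^{-1}\|\lesssim\theta^{-1}$; the transformed H\"older constant is then $\lesssim\theta^{-1}R^\beta b\leq 1$ directly from \eqref{rbt}. No lower bound on $\theta$ and no $\theta^{1+\beta}$-fine subdivision is needed.

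The recombination step is where your fix actually breaks. For diameter-$r$ patches the set $A$ of interacting triples $(j,k,l)$ does satisfy ``each pair of indices determines the third up to $O(1)$'' (since $\Sigma_1^j+\Sigma_2^k$ has diameter $O(r)$, etc.), but that property is \emph{not} sufficient to conclude $\sum_{(j,k,l)\in A}\|f_j\|\|g_k\|\|h_l\|\lesssim\|f\|\,\|g\|\,\|h\|$: take $A=\{(j,k,j+k)\,:\,j,k\in\{1,\dots,n\}\}$ with all patch norms equal; the left-hand side exceeds the right-hand side by a factor $\sim n^{1/2}$. This additive structure $l\approx j+k$ is exactly what convolution produces, so the loss is real. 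What saves the paper is the Loomis--Whitney index structure: because each surface needs no decomposition in its own normal direction at width $R\theta$, the pieces are indexed as $\Sigma_1^{k_2,k_3}$, $\Sigma_2^{k_1,k_3}$, $\Sigma_3^{k_1,k_2}$, and the recombination is the discrete Loomis--Whitney bound $\sum_{k_1,k_2,k_3}a_{k_2k_3}b_{k_1k_3}c_{k_1k_2}\leq\|a\|_{\ell^2}\|b\|_{\ell^2}\|c\|_{\ell^2}$ (three applications of Cauchy--Schwarz, as in Step~5 of the proof of Proposition~\ref{mainprop}). Your isotropic patches at the much finer scale $r$ with $r^\beta b\lesssim\theta^{1+\beta}$ destroy this structure: a patch of $\Sigma_1$ far from $\sigma_1^0$ is tilted relative to $\mathfrak{n}_1^0$ by up to $bR^\beta\sim\theta$, so its slab index in the $\mathfrak{n}_1^0$ direction is determined by the other two indices only up to $O(\theta R/r)$ rather than $O(1)$, and the recombination becomes a discrete nonlinear Loomis--Whitney problem essentially equivalent to the statement being proved. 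To close the argument you should either adopt the paper's $R\theta$-slab decomposition together with the anisotropic rescaling, or supply a genuinely different recombination argument.
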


Finally, we remark that the factor $\theta^{-\frac12}$ which appears in \eqref{eq:conv_theta} has also been obtained in \cite[Theorem 1.2]{MR2155223}, but with an unspecified dependence of $R$ on $b$ and $\theta$.

\section{Linear changes of coordinates}\label{sect:reduction_main}
\noindent

The aim of this section is to use linear transformations in order to
derive Corollaries~\ref{cor:conv_gen},\ref{cor:conv_theta} from
Theorem \ref{thm:conv}. Parts of these arguments will also be useful
in the proof of Theorem \ref{thm:conv}.

\begin{proof}[Proof of Corollary \ref{cor:conv_gen}]
  We may assume that we have a single
  coordinate patch for each surface, i.e. there is a global
  parametrization $\varphi_i: \R^2 \supset \Omega_i\to \R^3$,
  $\varphi_i(\Omega_i)=\Sigma_i$.
  Therefore, $\Sigma_i'=T\Sigma_i$ is
  parametrized by $\varphi_i'=T\varphi_i$.  For $i=1,2,3$ we define
  the induced normals
  \begin{equation*}
    m_i(u)=\partial_1 \varphi_i(u)\times \partial_2 \varphi_i(u), \qquad m'_i(u)=\partial_1 \varphi'_i(u)\times \partial_2 \varphi'_i(u).
  \end{equation*}
  It is easily checked that
  \begin{equation}\label{eq:change_induced_normals}
    m_i'=\det(T)(T^{-1})^{\top} m_i
  \end{equation}
  and therefore
  \begin{equation}\label{eq:change_length1}
    \begin{split}
      |m_1'||m_2'||m_3'|=&\ |\det(m_1',m_2',m_3')||\det(\mathfrak{n}_1',\mathfrak{n}_2',\mathfrak{n}_3')|^{-1}\\
      =&\ \frac{\det(T)^2\det(\mathfrak{n}_1,\mathfrak{n}_2,\mathfrak{n}_3)}{|\det(\mathfrak{n}_1',\mathfrak{n}_2',\mathfrak{n}_3')|}|m_1||m_2||m_3|
    \end{split}
  \end{equation}
  Let $f'\in L^2(\Sigma_1')$, $g'\in L^2(\Sigma_2')$ and $h'\in
  L^2(\Sigma_3')$ be given and it is enough to consider non-negative functions. We write $f=f'(T\cdot)$, and
  \begin{equation*}
    \tilde{f'}(u)=f'(\varphi'_1(u))|m'_1(u)|^{\frac12},
  \end{equation*}
  and similarly for $g'$, $h'$. With this notation and by duality, our
  claim is equivalent to
  \begin{equation}\label{eq:rewrite_conv_gen1}
    \begin{split}
      &\left|\int
        \tilde{f'}(u)\tilde{g'}(v)\tilde{h'}(w)|m_1'(u)|^{\frac12}|m_2'(v)|^{\frac12}|m_3'(w)|^{\frac12}
        d\nu'(u,v,w)\right|\\
      & \qquad \qquad \leq
      \frac{C}{\sqrt{d}}\|\tilde{f'}\|_{L^2(\Omega_1)}\|\tilde{g'}\|_{L^2(\Omega_2)}\|\tilde{h'}\|_{L^2(\Omega_3)}
    \end{split}
  \end{equation}
  with the measure
  \begin{equation*}
    d\nu'(u,v,w)=\delta(\varphi'_1(u)+\varphi'_2(v)-\varphi'_3(w))dudvdw,
  \end{equation*}
  where $\delta$ denotes the Dirac delta distribution at the origin in
  $ \R^3$.  Let us also define the measure
  \begin{equation*}
    d\nu(u,v,w)=\delta(\varphi_1(u)+\varphi_2(v)-\varphi_3(w))dudvdw.
  \end{equation*}
  Using $\delta(T\cdot)=|\det(T)|^{-1}\delta(\cdot)$, and
  \eqref{eq:change_length1} we have that
  \begin{equation*}
    |m_1'|^{\frac12}|m_2'|^{\frac12}|m_3'|^{\frac12}d\nu'
    \sim\frac{|m_1|^{\frac12}|m_2|^{\frac12}|m_3|^{\frac12}}{|\det(\mathfrak{n}_1',\mathfrak{n}_2',\mathfrak{n}_3')|^{\frac12}}d\nu
  \end{equation*}
  by the transversality assumption \eqref{eq:trans} with $\theta=\frac12$ on the normals to
  $\Sigma_i$.  Therefore \eqref{eq:rewrite_conv_gen1} is equivalent to
  \begin{equation}\label{eq:rewrite_conv_gen2}
    \begin{split}
      &\left|\int \tilde{f'}(u)\tilde{g'}(v)\tilde{h'}(w)\frac{|m_1(u)|^{\frac12}|m_2(v)|^{\frac12}|m_3(w)|^{\frac12}d\nu(u,v,w)}{|\det(\mathfrak{n}_1'(\varphi'_1(u)),\mathfrak{n}_2'(\varphi_2'(v)),\mathfrak{n}_3'(\varphi_3'(w)))|^{\frac12}}\right|\\
      & \qquad \qquad \leq
      \frac{C}{\sqrt{d}}\|\tilde{f'}\|_{L^2(\Omega_1)}\|\tilde{g'}\|_{L^2(\Omega_2)}\|\tilde{h'}\|_{L^2(\Omega_3)}.
    \end{split}
  \end{equation}
  Observe that it follows from our assumption that the corresponding
  estimate for $\Sigma_1,\Sigma_2,\Sigma_3$, namely
  \begin{align*}
    &\left|\int \tilde{f'}(u)\tilde{g'}(v)\tilde{h'}(w)|m_1(u)|^{\frac12}|m_2(v)|^{\frac12}|m_3(w)|^{\frac12}d\nu(u,v,w)\right|\\
    & \qquad \qquad \leq
    C\|\tilde{f'}\|_{L^2(\Omega_1)}\|\tilde{g'}\|_{L^2(\Omega_2)}\|\tilde{h'}\|_{L^2(\Omega_3)}
  \end{align*}
  holds true. This implies \eqref{eq:rewrite_conv_gen2}.
\end{proof}

\begin{proof}[Proof of Corollary \ref{cor:conv_theta}]
  Partitioning each of the three surfaces into smaller sets we
  strenghten the relation \eqref{rbt} to
  \begin{equation}
    R^\beta b \leq  2^{-10}  \theta.
    \label{rbta}\end{equation}
  Consider a fixed triplet $(\sigma_1^0,\sigma_2^0,\sigma_3^0)\in
  \Sigma_1\times \Sigma_2\times\Sigma_3$. For arbitrary
  $(\sigma_1,\sigma_2,\sigma_3)\in \Sigma_1\times
  \Sigma_2\times\Sigma_3$ we use the H\"older condition to compute
  \begin{equation}
    |\mathfrak{n}_i(\sigma_i) - \mathfrak{n}_i (\sigma_i^0)| \leq b
    R^{\beta} \leq 2^{-10} \theta, \qquad i = 1,2,3
    \label{ndif}\end{equation}
  This implies that $N(\sigma_1,\sigma_2,\sigma_3)$ does not vary
  much,
  \begin{equation}
    |\det N(\sigma_1^0,\sigma_2^0,\sigma_3^0) -
    \det N(\sigma_1,\sigma_2,\sigma_3)| \leq 2^{-8} \theta
    \label{detndif}\end{equation}
  Hence, after possibly increasing $\theta$, we may assume that on $
  \Sigma_1\times \Sigma_2\times\Sigma_3$ we have the stronger bound
  \[
  \theta \leq |N(\sigma_1,\sigma_2,\sigma_3)| \leq (1+2^{-8}) \theta.
  \]
  From the H\"older condition we also obtain
  \[
  |(\sigma_i -\sigma_i^0) \mathfrak{n}_i^0| \leq 2^{-10} R \theta
  \]
  which shows that $\Sigma_i$ is contained in an infinite slab of
  thickness $ 2^{-10} R \theta$ with respect to the $\mathfrak{n}_i^0$
  direction. By orthogonality with respect to such slabs it suffices
  to prove the desired bound \eqref{eq:conv_theta} in the case when
  the other two surfaces are contained in similar slabs,
  \begin{equation}
    |(\sigma_i -\sigma_i^0)  \mathfrak{n}_j^0| \leq 2^{-10} R \theta,
    \qquad i,j = 1,2,3.
    \label{planeloc}\end{equation}
  We will apply Corollary \ref{cor:conv_gen} with
  \begin{equation*}
    T=  R \theta 
    (N^\top)^{-1}, \qquad N = N(\sigma_1^0,\sigma_2^0,\sigma_3^0).
  \end{equation*}
  We need to show that for $\tilde{\Sigma}_i:=T^{-1}\Sigma_i$ we have
  the conditions \eqref{eq:trans} with $b=1$, \eqref{eq:hoeldercond}
  with $\theta = \frac12$ and the size condition
  $\diam(\tilde{\Sigma}_i)\leq 1$.  Concerning the latter we observe
  that
  \[
  T^{-1} (\sigma_i -\sigma_i^0) = \frac{1}{R \theta} \left(
    \begin{array}{c} \mathfrak{n}_1^0 (\sigma_i -\sigma_i^0) \cr
      \mathfrak{n}_2^0 (\sigma_i -\sigma_i^0) \cr \mathfrak{n}_3^0
      (\sigma_i -\sigma_i^0) \end{array} \right)
  \]
  Thus by \eqref{planeloc} we obtain
  \[
  |T^{-1} (\sigma_i -\sigma_i^0)| \leq \sqrt{3} 2^{-10} \leq \frac12
  \]
  therefore $\diam(\tilde{\Sigma}_i)\leq 1$.  For the transversality
  condition, we first estimate
  \begin{equation}\label{eq:norm_inversea}
    \|N^{-1}\|\leq 2
    |\det{N}|^{-1}\|N\|^2 \leq 6 \theta^{-1} 
  \end{equation}  
  This gives a bound for $T$, namely
  \begin{equation}\label{eq:norm_inverseb}
    \|T\|\leq  6 R.
  \end{equation}
  The unit normal $\tilde{\mathfrak{n}}_i(\tilde{\sigma}_i)$ to
  $\tilde{\Sigma}_i$ in $\tilde{\sigma}_i\in \tilde{\Sigma}_i$ is
  given by
  \begin{equation}\label{eq:change_normals}
    \tilde{\mathfrak{n}}_i(\tilde{\sigma}_i)=
    \frac{N^{-1}\mathfrak{n}_i(T\tilde{\sigma}_i)}{|N^{-1}\mathfrak{n}_i(T\tilde{\sigma}_i)|}.
  \end{equation}
  By construction for $\tilde\sigma_i^0 = T^{-1} \sigma_i^0$ we have $
  \tilde{\mathfrak{n}}_i(\tilde{\sigma}_i^0)=N^{-1}{\mathfrak{n}}_i({\sigma}_i^0)
  = e_i$. By \eqref{ndif} and \eqref{eq:norm_inversea} it follows that
  \[
  |N^{-1}\mathfrak{n}_i(T\tilde{\sigma}_i) -e_i| \leq 2^{-7}
  \]
  which implies that
  \[
  |\tilde{\mathfrak{n}}_i(T\tilde{\sigma}_i) -e_i| \leq 2^{-5}.
  \]
  This in turn yields the desired transversality condition
  \[
  \det
  \tilde{N}(\tilde{\sigma}_1,\tilde{\sigma}_2,\tilde{\sigma}_3)\geq
  1/2.
  \]
  Finally, for the H\"older condition we use \eqref{eq:norm_inversea}
  and \eqref{eq:norm_inverseb} to compute
  \begin{equation*}
    \frac{|\tilde{\mathfrak{n}}_i(\tilde{\sigma})-\tilde{\mathfrak{n}}_i(\tilde{\rho})|}{|\tilde{\sigma}-\tilde{\rho}|^{\beta}}
    \leq 3 \|N^{-1}\|\|T\|^{\beta}\frac{|\mathfrak{n}_i(T\tilde{\sigma})
      -\mathfrak{n}_i(T\tilde{\rho})|}{|T\tilde{\sigma}-T\tilde{\rho}|^{\beta}}\leq 
    2^7  \theta^{-1} R^\beta b \leq 1
  \end{equation*}
  which proves the desired bound for the first term in the H\"older
  condition \eqref{eq:hoeldercond} with $b=1$. The second term in
  \eqref{eq:hoeldercond} is treated in the same way.
\end{proof}

\section{Induction on scales}
\noindent
Theorem \ref{thm:conv} is obtained from uniform estimates for $f\ast
g$ thickened surfaces $\Sigma_3(\eps)$ given by
\begin{equation} \label{epsdef} \Sigma_3(\eps):=\{ v \in \R^3 \mid
  \dist(v,\Sigma_3)\leq \eps\}.
\end{equation}
Our main technical result is the following.
\begin{prop}\label{prop:eps_thick}
  For all $\Sigma_1$, $\Sigma_2$, and $\Sigma_3$ with $\diam(\Sigma_i)\leq 1$ as in Theorem \ref{thm:conv}
  and $f\in L^2(\Sigma_1)$, $g\in L^2(\Sigma_2)$ and $0 < \eps \leq 1$
  the estimate
  \begin{equation}\label{eq:eps_thick}
    \left\|f \ast g \right\|_{L^2(\Sigma_3(\eps))}
    \leq C \sqrt{\eps} \|f\|_{L^2(\Sigma_1)}\|g\|_{L^2(\Sigma_2)}
  \end{equation}
  holds true with a constant $C$ depending only on $\beta$.
\end{prop}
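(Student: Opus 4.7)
The approach is an induction on the thickness parameter $\eps$, in the spirit of Bourgain--Wolff--Tao. Let $C_*(\eps)$ denote the smallest constant for which \eqref{eq:eps_thick} holds uniformly over all triples $(\Sigma_1,\Sigma_2,\Sigma_3)$ satisfying the hypotheses of Theorem~\ref{thm:conv}. The objective is to show $C_*(\eps) \leq C(\beta)$ for all $\eps \in (0,1]$.

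A crude base estimate, say $C_*(\eps) \leq C\eps^{-1/2}$, follows from bounding $|\Sigma_3(\eps)|$ and applying Cauchy--Schwarz directly to the expression defining $f \ast g$; this is a finite but weak starting point, to be boosted by iteration. The core of the argument is a self-improving inequality that controls $C_*(\eps)$ in terms of $C_*$ at a larger scale, with multiplicative and additive errors that remain summable across the iteration.

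For the inductive step, fix an intermediate scale $\delta$ with $\eps \leq \delta \leq 1$ and partition each $\Sigma_i$ into pieces $\Sigma_i^j$ of diameter comparable to $\delta$. The H\"older condition \eqref{eq:hoeldercond} with $b=1$ ensures that each $\Sigma_i^j$ lies within a $C\delta^{1+\beta}$-neighborhood of its tangent plane, so the choice $\delta \leq \eps^{1/(1+\beta)}$ places that neighborhood inside the $\eps$-slab. The transversality hypothesis \eqref{eq:trans} then provides an approximate double orthogonality: for each output piece $\Sigma_3^\ell$, only $O(1)$ pairs $(j,k)$ can satisfy $(\Sigma_1^j + \Sigma_2^k)\cap \Sigma_3^\ell(\eps) \neq \emptyset$, and each input pair feeds into only $O(1)$ output pieces. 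Rescaling each such interacting triple by $\delta^{-1}$ produces surfaces of diameter $1$ whose H\"older constant is $\delta^\beta \leq 1$, whose transversality $\theta = 1/2$ is preserved, and whose effective thickness has become $\eps/\delta$; the inductive hypothesis $C_*(\eps/\delta)$ applies on each rescaled triple. Reassembling yields a recursion of the form
\[
C_*(\eps)^2 \leq (1 + O(\delta^\beta)) \, C_*(\eps/\delta)^2 + A(\beta),
\]
where $A(\beta)$ absorbs the remaining lower-order terms.

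Iterating with a geometric sequence of scales chosen so that the $O(\delta^\beta)$ corrections are summable, the crude bound $C_*(\eps_0) \leq C\eps_0^{-1/2}$ is absorbed after finitely many steps and produces a uniform estimate $C_*(\eps) \leq C(\beta)$. The main technical obstacle will be making the $O(1)$ counting in the double orthogonality genuinely quantitative, with a constant independent of $\eps$; this is a sharp transversality argument, and it must survive the fact that $f^j \ast g^k$ is only \emph{approximately} localized near $\Sigma_1^j + \Sigma_2^k$, so one has to handle the spillover in the spatial decomposition carefully. The isotropic scaleability of the H\"older hypothesis --- precisely the feature that the normalization $b=1$ passes to $b' = \delta^\beta \leq 1$ under rescaling, without increasing --- is exactly what makes the induction close.
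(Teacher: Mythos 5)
Your high-level skeleton --- induction on scales, the isotropic rescaling of the H\"older condition, and the observation that at spatial scale $\eps^{1/(1+\beta)}$ the pieces are flat within the $\eps$-slab so a direct Cauchy--Schwarz argument closes --- matches the paper. But the key step, the orthogonality count, is wrong as stated, and it is the heart of the matter. The claim that for each output piece $\Sigma_3^\ell$ only $O(1)$ pairs $(j,k)$ interact is false: the set of $\sigma_1\in\Sigma_1$ admitting a partner $\sigma_2\in\Sigma_2$ with $\sigma_1+\sigma_2\in\Sigma_3^\ell(\eps)$ is a $\delta$-tube around a curve (a transversal intersection of two surfaces), so each $\ell$ sees $\sim\delta^{-1}$ pairs, and the total number of interacting triples is $\sim\delta^{-3}$. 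What is true is that any two of the three indices determine the third up to $O(1)$; but that combinatorial property alone does not imply the lossless bound $\sum a_jb_kc_\ell\leq C\|a\|_{\ell^2}\|b\|_{\ell^2}\|c\|_{\ell^2}$ (consider $S=\{j+k+\ell=0 \bmod n\}$ with $a=b=c=1$, which loses $n^{1/2}$). To make the main term lossless one must index the pieces by the coordinate slabs: $\Sigma_1$ by $(k_2,k_3)$, $\Sigma_2$ by $(k_1,k_3)$, $\Sigma_3$ by $(k_1,k_2)$, so that the support condition for the triple convolution at $0$ forces each shared index to match and the sum takes the discrete Loomis--Whitney form
\[
\sum_{k_1,k_2,k_3} a_{k_2,k_3}\, b_{k_1,k_3}\, c_{k_1,k_2} \leq \|a\|_{\ell^2}\|b\|_{\ell^2}\|c\|_{\ell^2}.
\]
This product structure of the index set is exactly what your single-index decomposition fails to expose, and without it the Cauchy--Schwarz step loses a power of $\delta^{-1}$ that destroys the induction.

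Two further points. First, the ``spillover'' you defer is not a technicality: exact vanishing of cross terms requires gaps between slabs, and a priori the functions could concentrate all their mass in the transition regions. Since the iteration takes roughly $\log\log(1/\eps)$ steps, the per-step error must be $o(1)$, not $O(1)$; the paper achieves $O(R^{\beta/4})$ by pigeonholing the slab boundaries $s_k$ so that the $L^2$ mass of $f_1,f_2$ near them is small relative to the whole. For the same reason the additive term $A(\beta)$ in your recursion does not close: summed over the iteration it diverges. Second, your choice $\delta\leq\eps^{1/(1+\beta)}$ puts you directly in the base-case regime, where the pieces are already flat within the $\eps$-slab and no inductive hypothesis is needed; conversely, one cannot jump from scale $1$ to scale $\eps^{1/(1+\beta)}$ in a single decomposition, precisely because the almost-orthogonality errors at the slab boundaries are only small \emph{relative to the ratio of consecutive scales}. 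The induction must descend gradually (the paper uses $R\mapsto R^{2(\beta+1)/(\beta+2)}$), with the slab decomposition and the pigeonholing supplying, respectively, the lossless main term and the summable errors. (Incidentally, $f^j\ast g^k$ is \emph{exactly} supported in $\Sigma_1^j+\Sigma_2^k$; the localization issue is not where you place it.)
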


We first show how this implies the main Theorem.

\begin{proof}[Proof of Theorem \ref{thm:conv}]
  For $f \in C_0(\Sigma_1)$ and $g \in C_0(\Sigma_2)$ we have $f \ast
  g \in C_0(\R^3)$. Then
  \[
  \| f \ast g \|_{L^2(\Sigma_3)} = \lim_{\eps \rightarrow 0}
  \eps^{-\frac12} \| f \ast g \|_{L^2(\Sigma_3(\eps))},
  \]
  therefore \eqref{eq:conv} follows from \eqref{eq:eps_thick}.  The
  result extends by density to $f \in L^2(\Sigma_1)$ and $g \in
  L^2(\Sigma_2)$.
\end{proof}

The rest of this section is devoted to the proof of Proposition
\ref{prop:eps_thick}.
By repeating the argument from the proof of Corollary
\ref{cor:conv_theta} --- namely a finite partition of the surfaces
$\Sigma_i$, scaling and transforming the normals at one triplet of
points to $\mathfrak{e}_1, \mathfrak{e}_2, \mathfrak{e}_3$ --- we can
reduce Proposition \ref{prop:eps_thick} to the following setup:

There are unit cubes $\Omega_i\subset \R^2$, $i=1,2,3$ centered at
points $a_i^0 \in \R^2$, and $C^{1,\beta}$ functions $\phi_i$ in the
doubled cubes $2\Omega_i$ so that
\begin{equation}\label{eq:surface_res}
  \begin{split}
    \Sigma_1=&\{(x,y,z)\in \R^3 \mid (y,z) \in \Omega_1, \; x=\phi_1(y,z)\},\\
    \Sigma_2=&\{(x,y,z)\in \R^3\mid (x,z) \in \Omega_2, \; y=\phi_2(x,z)\},\\
    \Sigma_3=&\{(x,y,z)\in \R^3 \mid (x,y) \in \Omega_3,\;
    z=\phi_3(x,y)\},
  \end{split}
\end{equation}
where the functions $\phi_i$ satisfy
\begin{equation}\label{eq:fix}
  \nabla \phi_1(a_1^0)=\nabla\phi_2(a_2^0)=\nabla\phi_3(a_3^0)=0 
\end{equation}
and have small H\"older constant
\begin{equation} \label{s1bnew} \sup_{w,\tilde{w}\in 2\Omega_i}\frac{|
    \nabla \phi_i (w) - \nabla \phi_i {(\tilde{w})} |}{|w -
    \tilde{w}|^\beta} \leq 2^{-40}.
\end{equation}

To set up the induction on scales we allow the scale of the cubes
$\Omega_i$ to vary from $0$ to $1$. Precisely, for $ 0 < \eps \leq r
\leq 1$ we denote by $C(r,\eps)$ the best constant $C$ in the estimate
\begin{equation}\label{eq:eps}
  \left\|f \ast g \right\|_{L^2(\Sigma_3(\eps))}
  \leq C \sqrt{\eps} \|f\|_{L^2(\Sigma_1)}\|g\|_{L^2(\Sigma_2)}
\end{equation}
considered over all surfaces $\Sigma_1$, $\Sigma_2$, $\Sigma_3$ as in
\eqref{eq:surface_res} with $\Omega_i$ cubes of size $r$ and $\phi_i$
satisfying \eqref{eq:fix} and \eqref{s1bnew}.

For $0 \leq \eps \leq r \leq R \leq 1$ we also introduce the auxiliary
notation $C(R,r,\eps)$ as the best constant in the estimate
\eqref{eq:eps} over all surfaces $\Sigma_1$, $\Sigma_2$, $\Sigma_3$ as
in \eqref{eq:surface_res} with $\Omega_i$ cubes of size $r$ and
$\phi_i$ satisfying \eqref{s1bnew} in larger cubes $4\Omega_i$ and a weaker
version of \eqref{eq:fix}, namely
\begin{equation}\label{eq:fixa}
  |\nabla \phi_i(a_i^0)| \leq 2^{-40} (\sqrt{3} R)^\beta 
\end{equation}

Throughout this paper (and hence in the above definitions) we agree to
the following convention: the size of a cube is half of its
side-length. The reason for doing so is purely technical as it spares
us from carrying a factor of $2$ in some estimates.

As a starting point of our induction we establish the desired bound
when $r$ is sufficiently small, depending on $\eps$.

\begin{lemma}\label{lem:start}
  Assume that $ r \leq \eps^\frac{1}{\beta+1}$.  Then $C(r,\eps)\leq
  1$.
\end{lemma}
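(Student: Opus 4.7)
My plan is to exploit the hypothesis $r\leq \eps^{1/(\beta+1)}$, which forces each $\Sigma_i$ to be so nearly flat at scale $r$ that the claim reduces to a slab version of the classical Loomis--Whitney inequality for coordinate planes. Integrating the H\"older bound \eqref{s1bnew} outward from $a_i^0$ and using $\nabla\phi_i(a_i^0)=0$, for every $w\in\Omega_i$ one has
\[
|\nabla\phi_i(w)|\leq 2^{-40}|w-a_i^0|^\beta,\qquad |\phi_i(w)-c_i|\leq \tfrac{2^{-40}}{1+\beta}|w-a_i^0|^{1+\beta},
\]
where $c_i:=\phi_i(a_i^0)$. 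Since $|w-a_i^0|\leq \sqrt 2\,r$ and $r^{1+\beta}\leq \eps$ by hypothesis, both quantities are tiny, so each $\Sigma_i$ is trapped in a very thin slab about the coordinate plane $\{x_i=c_i\}$, and in particular $\Sigma_3(\eps)\subset \{|z-c_3|\leq \eps(1+o(1))\}$.

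Next, I would dualize: $\|f\ast g\|_{L^2(\Sigma_3(\eps))}=\sup|\langle f\ast g,h\rangle|$ over $\|h\|_{L^2(\R^3)}\leq 1$ with $\supp h\subset\Sigma_3(\eps)$. Parametrizing each $\Sigma_i$ as a graph over $\Omega_i$ and absorbing the surface densities (each $1+O(2^{-80})$) into $F_i$ with $\|F_i\|_{L^2(\Omega_i)}=\|f_i\|_{L^2(\Sigma_i)}$, the pairing becomes, up to a factor $1+O(2^{-80})$,
\[
\int F_1(y_1,z_1)\,F_2(x_2,z_2)\,h\bigl(\phi_1(y_1,z_1)+x_2,\; y_1+\phi_2(x_2,z_2),\; z_1+z_2\bigr)\,dy_1\,dz_1\,dx_2\,dz_2.
\]
For fixed $(z_1,z_2)$, I would change variables $(y_1,x_2)\mapsto (X,Y):=(\phi_1+x_2,\,y_1+\phi_2)$; the Jacobian $|1-\partial_1\phi_1\,\partial_1\phi_2|$ equals $1+O(2^{-80})$ and is negligible, and Cauchy--Schwarz in $(X,Y)$ bounds the inner integral by $\|h(\cdot,\cdot,z_1+z_2)\|_{L^2_{XY}}\,\alpha(z_1)\beta(z_2)$, where $\alpha(z_1):=\|F_1(\cdot,z_1)\|_{L^2_y}$ and $\beta(z_2):=\|F_2(\cdot,z_2)\|_{L^2_x}$.

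The $\sqrt\eps$ gain then falls out of the thinness of $\supp h$ in the $z$-direction. Writing $v:=z_1+z_2$, the integral is controlled by $\int \|h(\cdot,\cdot,v)\|_{L^2}(\alpha\ast\beta)(v)\,dv$, and by the first step $v$ is confined to an interval of length $O(\eps)$. Combining the pointwise bound $\|\alpha\ast\beta\|_{L^\infty}\leq \|F_1\|_{L^2}\|F_2\|_{L^2}$ with Cauchy--Schwarz in $v$ yields a bound of the form $C\sqrt\eps\,\|f\|_{L^2(\Sigma_1)}\|g\|_{L^2(\Sigma_2)}\|h\|_{L^2}$. The hard part will be the careful bookkeeping of constants: the small H\"older factor $2^{-40}$, the precise relationship between the $\eps$-tube $\Sigma_3(\eps)$ and the ambient $\eps$-slab about $\{z=c_3\}$, and all the $1+O(2^{-80})$ factors arising from the graph parametrization and the change of variables must combine so that the final constant is at most $1$.
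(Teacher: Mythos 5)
Your proposal is correct and follows essentially the same route as the paper's proof: localize $\supp h$ to an $O(\eps)$-slab in $z$ using the smallness of $\nabla\phi_3$ forced by $r^{1+\beta}\leq\eps$, change variables in the two transversal graph coordinates with Jacobian $|1-\partial_1\phi_1\,\partial_1\phi_2|$ close to $1$, and then apply Cauchy--Schwarz twice (once giving $\|\alpha\ast\beta\|_{L^\infty}\leq\|F_1\|_{L^2}\|F_2\|_{L^2}$, once over the short $z$-interval to produce $\sqrt{\eps}$). The only remaining work is the constant bookkeeping you already flag, which is handled the same way in the paper.
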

\begin{proof}
  For $f\in L^2(\Sigma_1)$, $g \in L^2(\Sigma_2)$ and an arbitrary
  test function $\psi$ the convolution $(f\ast g) (\psi)$ can be
  expressed in the form
  \begin{equation*}
    \int \tilde{f}(y,z)\tilde{g}(x,\zeta-z)
    \psi(x+\phi_1(y,z),y+\phi_2(x,\zeta-z),\zeta) dxdydz d \zeta
  \end{equation*}
  where
  \begin{align*}
    \tilde{f}(y,z)=&f(\phi_1(y,z),y,z)\sqrt{1+|\nabla\phi_1(y,z)|^2}\\
    \tilde{g}(x,\zeta)=&g(x,\phi_2(x,\zeta),\zeta)\sqrt{1+|\nabla\phi_2(x,\zeta)|^2}.
  \end{align*}
  Assume that $\supp \psi\subset \Sigma_{3}(\eps)$. For every $v \in
  \supp \psi$ there is $a \in \Omega_3$ such that $d ((a,\phi_3(a)),
  v) \leq \eps$ and $d(a,a_3^0) \leq \sqrt{3}r$.  Using \eqref{s1bnew}
  we have:
  \[
  | \phi_3 (a) - \phi_3 (a_3^0) | \leq \sqrt{3} r \sup_{b \in
    \Omega_3} |\nabla \phi_3 (b)| \leq 2^{-10} (\sqrt{3} r)^{1+\beta}
  \leq 2^{-8} \eps
  \]
  Then,
  \begin{equation*}
    \supp \psi \subset \{(x,y,z)\mid |z-\phi_3(a_3^0)|\leq  \frac{\eps}4\}.
  \end{equation*}
  Let us denote
  $J(\eps)=[\phi_3(a_3^0)-\frac{\eps}4,\phi_3(a_3^0)+\frac{\eps}4]$.
  The Cauchy-Schwarz inequality shows that
  \begin{align*}
    | (f \ast g)(\psi)| &\leq \int
    \|\tilde{f}(\cdot,z)\|_{L^2}\|\tilde{g}(\cdot,\zeta-z)\|_{L^2}
    I(z,\zeta)dzd\zeta\\
    &\leq \|\tilde{f}\|_{L^2}\|\tilde{g}\|_{L^2}\int_{J(\eps)}
    \sup_{z} I(z,\zeta) d\zeta,
  \end{align*}
  where
  \begin{equation*}
    I(z,\zeta):=\left(\int|\psi(x+\phi_1(y,z),y+\phi_2(x,\zeta-z),\zeta)|^2dxdy.
    \right)^{\frac12}
  \end{equation*}
  By the change of variables
  \begin{equation*}
    \Phi_{z,\zeta}(x,y)=(x+\phi_1(y,z),y+\phi_2(x,\zeta-z))
  \end{equation*}
  we obtain
  \begin{equation*}
    I(z,\zeta)\leq \sqrt{2} \|\psi(\cdot,\cdot,\zeta)\|_{L^2(\R^2)},
  \end{equation*}
  because $|\det D\Phi_{z,\zeta}(x,y)|=|1-\partial_y\phi_1(y,z)
  \partial_x\phi_2(x,\zeta-z)| \geq \frac12$.  Finally, since $z$ is
  in an interval of size $\frac{\eps}2$, using again the
  Cauchy-Schwarz inequality we obtain
  \begin{equation*}
    \int_{J(\eps)} \sup_{z} I(z,\zeta) d\zeta \leq \frac1{\sqrt{2}} \sqrt{\eps}\|\psi\|_{L^2}
  \end{equation*}
  Hence it follows that
  \begin{align*}
    (f\ast g) (\psi)\leq &
    \sqrt{\eps}\|\tilde{f}\|_{L^2}\|\tilde{g}\|_{L^2}
    \|\psi\|_{L^2}\\
    \leq & \sqrt{\eps}
    \|f\|_{L^2(\Sigma_1)}\|g\|_{L^2(\Sigma_2)}\|\psi\|_{L^2(\Sigma_{3}(\eps))}.
  \end{align*}
  The bound $C(r,\eps) \leq 1$ follows by density and duality.
\end{proof}

The previous lemma shows that $C(r,\eps)$ is finite. By the argument
in the proof of Corollary \ref{cor:conv_theta} it also follows that
$C(R,r,\eps)$ is always finite.  The next lemma allows us to bound the
auxiliary variable $C(R,r,\eps)$ in terms of $C(r,\eps)$.

\begin{lemma} The following estimate holds true
  \begin{equation}\label{general}
    C (R,r,\eps) \leq (1+R^{\beta}) C(r (1+R^\beta)^{1+\frac1\beta} ,\eps (1+R^\beta)^{1+\frac1\beta})
  \end{equation}
\end{lemma}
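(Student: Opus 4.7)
The plan is to reduce the auxiliary quantity $C(R,r,\eps)$ to the basic $C(r',\eps')$ by composing a linear change of coordinates that straightens the tangent planes at the three base points, with an isotropic rescaling of $\R^3$ that restores the sharp Hölder constant in \eqref{s1bnew}.

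Let $n_i^0$ be the unit normal to $\Sigma_i$ at $\sigma_i^0$. From \eqref{eq:fixa} and the explicit formula for the normal of a graph, $|n_i^0-\mathfrak{e}_i|\leq c R^\beta$. Setting $N^0=(n_1^0,n_2^0,n_3^0)$ and $T=(N^0)^{-\top}$ gives $T^\top n_i^0=\mathfrak{e}_i$ together with $\|T^{\pm 1}\|\leq 1+cR^\beta$. After the change of variables $v=T\hat v$, the new surfaces $\hat\Sigma_i=T^{-1}\Sigma_i$ have unit normal $\mathfrak{e}_i$ at the transformed base point, so each $\hat\Sigma_i$ may be reparametrized as a graph $\hat\phi_i$ in the corresponding coordinate with $\nabla\hat\phi_i$ vanishing at that base point.

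Next, the quantitative parameters must be tracked. Repeating the normal-transformation computation carried out in the proof of Corollary~\ref{cor:conv_theta} (now with $T$ close to the identity) shows that the new parameter cube $\hat\Omega_i$ has size at most $r(1+cR^\beta)$ and that the Hölder seminorm of $\nabla\hat\phi_i$ is enlarged by a factor at most $(1+cR^\beta)^{1+\beta}$. The Hölder bound remains valid on the enlarged domain $2\hat\Omega_i$ because in the definition of $C(R,r,\eps)$ the hypothesis \eqref{s1bnew} is imposed on the larger cube $4\Omega_i$, providing a buffer that absorbs the mild expansion caused by $T$. An isotropic dilation of $\R^3$ by $\mu=(1+cR^\beta)^{1+1/\beta}$ then scales the cube size and the thickening parameter by $\mu$, and divides the Hölder seminorm by $\mu^\beta=(1+cR^\beta)^{1+\beta}$, restoring it to $2^{-40}$. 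The resulting cube size and thickening match those on the right-hand side of \eqref{general}.

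Finally, the convolution estimate is transferred through the composite transformation. The isotropic rescaling is a symmetry of the homogeneous inequality \eqref{eq:eps} and leaves the constant unchanged, while the linear change of variables contributes a Jacobian factor bounded by $|\det N^0|^{-1/2}\leq 1+cR^\beta$, in analogy with the proof of Corollary~\ref{cor:conv_gen}. Combining these yields \eqref{general}. The main obstacle is the quantitative bookkeeping in the straightening step: one must verify that the single rescaling factor $(1+R^\beta)^{1+1/\beta}$ accommodates simultaneously the cube expansion induced by $T$ and the restoration of the sharp Hölder constant, which together dictate the specific exponent $1+1/\beta$ in \eqref{general}.
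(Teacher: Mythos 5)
Your proposal is correct and follows essentially the same route as the paper's proof: the paper simply fuses your two steps into the single matrix $T=\lambda (N^\top)^{-1}$ with $\lambda=(1+R^\beta)^{-1/\beta}$, and then carries out exactly the bookkeeping you describe (normals at the base points become $\mathfrak e_i$, cube size grows to at most $r(1+R^\beta)^{1+1/\beta}$, the H\"older seminorm of $\nabla\tilde\phi_i$ is restored, and the surface-measure/thickness Jacobians produce the prefactor $1+R^\beta$). The only cosmetic difference is the allocation of the factor $(1+R^\beta)^{1+1/\beta}$: the paper dilates only by $\lambda^{-1}=(1+R^\beta)^{1/\beta}$ and keeps the remaining $(1+R^\beta)$ as slack to absorb the $1+O(R^\beta)$ perturbations from the straightening map, which is the point your final paragraph correctly flags as the step needing care.
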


\begin{proof} 
  The argument proceeds along the lines of the proof of Corollary
  \ref{cor:conv_theta}, carefully keeping track of the scales.  Let
  $\sigma_i^0 \in \Sigma_i$ be the images of $a_i^0 \in \Omega_i$. We
  consider the linear transformation defined by the matrix
  \begin{equation*}
    T=\lambda(N^\top)^{-1}, \qquad 
    N = N(\sigma_1^0,\sigma_2^0,\sigma_3^0), \qquad 
    \lambda=(1+ R^\beta)^{-\frac1\beta}
  \end{equation*}
  We denote $ \tilde{\Sigma}_i=T^{-1} \Sigma_i$. We will show that the
  surfaces $\tilde{\Sigma}_i$ satisfy the conditions in the definition
  of $ C(r (1+R^\beta)^{1+\frac1\beta} ,\eps
  (1+R^\beta)^{1+\frac1\beta})$.

  We denote $\tilde{\sigma}_i^0 = T^{-1} \sigma_i^0$ and let
  $\tilde{a}_j^0$ be the projections of $ \tilde{\sigma}_i^0$ on the
  coordinate planes, and $\tilde{\Omega}_i$ the corresponding
  projections of $\tilde{\Sigma}_i$. Setting $i=3$ for convenience, we
  also consider the full correspondence $a_3 \to \tilde a_3$ given by
  \[
  \begin{split}
    a_3 = (x,y) \in \Omega_3 \to &\ \sigma_3 = (x,y,\phi_3(x,y)) \in
    \Sigma_3
    \\
    \to &\ \tilde \sigma_3 = T^{-1} \sigma_3 = (\tilde x,\tilde
    y,\tilde \phi(\tilde x,\tilde y)) \in \tilde \Sigma_3
    \\
    \to &\ \tilde a_3 = (\tilde x,\tilde y) \in \tilde{\Omega}_3
  \end{split}
  \]
  and similarly for $i = 1,2$.

  By construction the matrix of the unit normals to $\tilde{\Sigma}_i$
  at $\tilde{\sigma}_i^0$ is
  \[
  \tilde{N}(\tilde{\sigma}_1^0,
  \tilde{\sigma}_2^0,\tilde{\sigma}_3^0)=I.
  \]
  This implies that the condition \eqref{eq:fix} is satisfied for the
  surfaces $ \tilde{\Sigma}_i$ at the points $\tilde a_i^0$.

  The condition \eqref{eq:fixa} shows that
  \[
  |\mathfrak n_k(\sigma_k^0) - \mathfrak e_k| \leq 2^{-10}
  (\sqrt{3}R)^\beta
  \]
  which leads to
  \begin{equation}
    \| N - I\| \leq 2^{-8} R^\beta, \qquad \| N^{-1} - I\| \leq 2^{-8} R^\beta.
    \label{nbd}\end{equation}
  Also by \eqref{eq:fixa} and \eqref{s1bnew} it follows that
  \begin{equation}
    |\nabla \phi_3(a_3)| \leq 2^{-6} R^\beta, \qquad a \in 4\Omega_3
    \label{dfibd}\end{equation}

  We claim that $\tilde{\Omega}_i$ is contained in a cube of size $r
  (1+R^\beta)^{\frac1\beta}$ centered at $\tilde{a}_j^0$.  Set $i=3$
  for convenience, and consider the canonical map $T_3$ from
  $\Omega_3$ to $\tilde \Omega_3$, defined by $T_3 a_3 = \tilde a_3$.
  Then, by \eqref{nbd} and \eqref{dfibd} the chain rule shows that
  \begin{equation}
    \left \| D T_3 -
      \lambda^{-1} I_2\right\| \leq 2^{-4} R^\beta
    \label{dt3}\end{equation}
  This implies that $\tilde \Omega_i$ is contained inside a cube
  centered at $\tilde a_i^0$ which has size
  \[
  \tilde r = r (\lambda^{-1} + \sqrt{3}\ 2^{-4} R^\beta) \leq r(1+
  R^\beta)^{1+\frac1\beta}
  \]

  Or next goal is to establish the bound \eqref{s1bnew} for $\tilde
  \phi_3$.  Define the function $\Phi_3(x,y,z)=z-\phi_3(x,y)$.  Then
  $\tilde{\sigma}_3 \in \tilde{\Sigma}_3$ iff
  $\tilde{\Phi}_3(\tilde{\sigma}_3):=\Phi_3(T\tilde{\sigma}_3)=0$. The
  implicit function theorem guarantees the existence of
  $\tilde{\phi}_3$ which satisfies
  $\tilde{\Phi}_3(\tilde{x},\tilde{y},\tilde{\phi}_3(\tilde{x},\tilde{y}))=0$.
  In addition, setting $(N^\top)^{-1}= (t_1,t_2,t_3)$, we have
  \begin{equation*}
    \nabla \tilde{\phi}_3(\tilde{y},\tilde{z})=-\frac{1}{t_3\cdot n({x},{y})}\begin{pmatrix}t_1\cdot n({x},{y})\\t_2\cdot n({x},{y})\end{pmatrix}\, ,
    \; n(x,y)=\begin{pmatrix}-\partial_1 \phi_3(x,y)\\ -\partial_2
      \phi_3(x,y) \\ 1\end{pmatrix},
  \end{equation*}
  By \eqref{nbd} and \eqref{dfibd} we obtain
  \begin{equation*}
    |(N^\top)^{-1} n({x},{y}) -\mathfrak e_3 | \leq 2^{-5} R^\beta
  \end{equation*}
  which after some elementary computations leads to
  \[
  | \nabla \tilde{\phi}_3(\tilde a_3^1) - \nabla \tilde{\phi}_3(\tilde
  a_3^2)| \leq (1+ 2^{-4} R^\beta) | \nabla {\phi}_3( a_3^1) - \nabla
  {\phi}_3( a_3^2)|
  \]
  On the other hand \eqref{dt3} shows that
  \[
  | \tilde a_3^1 - \tilde a_3^2| \geq \lambda^{-1}( 1- 2^{-4} R^\beta)
  | a_3^1- a_3^2|
  \]
  Given the value of $\lambda$ it follows that
  \[
  \begin{split}
    \frac{| \nabla \tilde{\phi}_3(\tilde a_3^1) - \nabla
      \tilde{\phi}_3(\tilde a_3^2)|} { | \tilde a_3^1 - \tilde
      a_3^2|^\beta} & \leq \frac{(1+ 2^{-4} R^\beta)}{ (1+R^\beta)( 1-
      2^{-4} R^\beta)^\beta} \frac{ | \nabla {\phi}_3( a_3^1) - \nabla
      {\phi}_3( a_3^2)|}{| a_3^1- a_3^2|^\beta } \\ &\leq 2^{-10},
  \end{split}
  \]
  hence \eqref{s1bnew} is established for the surfaces
  $\tilde{\Sigma}_i$.

  Formula \eqref{eq:change_induced_normals}, combined with
  \eqref{nbd}, shows how the surface measures on $\Sigma_1$ and
  $\Sigma_2$ change:
  \begin{equation}\label{jacobbound}
    \lambda^2(1-2^{-4}R^\beta) d\tilde{\mu}_i(\tilde{\sigma}_i')\leq d\mu_i(\sigma_i')\leq \lambda^2(1+2^{-4}R^\beta) d\tilde{\mu}_i(\tilde{\sigma}_i'), \; i=1,2.
  \end{equation}

  There is a small variation in the thickness of the third surface.  A
  direct computation based on \eqref{nbd} gives
  \begin{equation} \label{thickbound} T^{-1} \Sigma_{3}(\eps) \subset
    \tilde{\Sigma}_3(\lambda^{-1}(1+2^{-8} R^{\beta})\eps).
  \end{equation}
  Moreover, if $\tilde{\psi}(\cdot)=\psi(T \cdot)$, then
  \begin{equation}
    \frac{(1-2^{-6}R^\beta)^{\frac12}}{\lambda^{\frac32}}\|\psi\|_{L^2(\R^3)}\leq \|\tilde{\psi}\|_{L^2(\R^3)}\leq \frac{(1+2^{-6}R^\beta)^{\frac12}}{\lambda^{\frac32}}\|\psi\|_{L^2(\R^3)}.
  \end{equation}
  From all the above considerations it follows that
  \begin{align*}
    &C (R,r,\eps)\\ \leq{} & (1+2^{-4} R^{\beta})^\frac52
    (1-2^{-6}R^\beta)^{-\frac12}C(r(1+R^\beta)^{1+\frac1{\beta}},\eps(1+2^{-8}
    R^{\beta})\lambda^{-1}),
  \end{align*}
  and the bound \eqref{general} follows immediately since $R \leq 1$.
\end{proof}

The following result establishes the key estimate needed for the
induction on scales argument.
\begin{prop} \label{mainprop} Assume that
  $(2^{40}\eps)^{\frac2{\beta+2}} \leq R $ and $R_\beta:=
  R^\frac{\beta+2}{2(\beta+1)}$. Then, the estimate
  \begin{equation} \label{indest1} C(R_\beta,\eps) \leq (1+
    R^\frac\beta4 ) C(R_\beta, R , \eps)
  \end{equation}
  holds true.
\end{prop}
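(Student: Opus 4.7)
The plan is to partition the surfaces $\Sigma_i$ at scale $R$, apply the induction hypothesis $C(R_\beta,R,\eps)$ on triples of subpatches, and combine the pieces via orthogonality on $\Sigma_3(\eps)$. First I would partition each $\Omega_i$ into $\sim(R_\beta/R)^2$ subcubes $\Omega_i^\alpha$ of size $R$ centered at points $a_i^\alpha$, with corresponding subsurfaces $\Sigma_i^\alpha$. The strong fixing condition \eqref{eq:fix} at $a_i^0$ together with the H\"older bound \eqref{s1bnew} immediately gives
\[
|\nabla\phi_i(a_i^\alpha)|=|\nabla\phi_i(a_i^\alpha)-\nabla\phi_i(a_i^0)|\leq 2^{-40}|a_i^\alpha-a_i^0|^\beta\leq 2^{-40}(\sqrt{3}R_\beta)^\beta,
\]
which is exactly the weakened fixing condition \eqref{eq:fixa} at scale $R_\beta$, while \eqref{s1bnew} itself is inherited verbatim on each subcube. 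Hence the constant $C(R_\beta,R,\eps)$ is applicable to every triple $(\Sigma_1^\alpha,\Sigma_2^\beta,\Sigma_3^\gamma)$.

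Next, decomposing $f=\sum_\alpha f_\alpha$ and $g=\sum_\beta g_\beta$ and covering $\Sigma_3(\eps)$ by the bounded-overlap family $\{\Sigma_3^\gamma(\eps)\}$, the support property $\supp(f_\alpha\ast g_\beta)\subset\Sigma_1^\alpha+\Sigma_2^\beta$ restricts the contributions to an incidence set
\[
A_\gamma=\{(\alpha,\beta):(\Sigma_1^\alpha+\Sigma_2^\beta)\cap\Sigma_3^\gamma(\eps)\neq\emptyset\}.
\]
I would then use the transversality \eqref{eq:trans} and the hypothesis $\eps\leq 2^{-40}R^{(\beta+2)/2}$ to carry out an implicit-function analysis of the three-dimensional incidence variety $\{(\sigma_1,\sigma_2,\sigma_3):\sigma_1+\sigma_2=\sigma_3\}$, yielding the key combinatorial bounds: for each $\gamma$ and each $\alpha$ there are $\mathcal{O}(1)$ values of $\beta$ with $(\alpha,\beta)\in A_\gamma$ (and symmetrically in $\alpha$), while each pair $(\alpha,\beta)$ lies in $A_\gamma$ for $\mathcal{O}(1)$ values of $\gamma$. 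These reflect the essentially one-parameter structure of $A_\gamma$.

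The endgame is then to apply the induction hypothesis on each triple, sum via the orthogonality
\[
\|f\ast g\|_{L^2(\Sigma_3(\eps))}^2\leq C\sum_\gamma \|(f\ast g)|_{\Sigma_3^\gamma(\eps)}\|_2^2,
\]
and combine using a Cauchy--Schwarz argument tailored to the bipartite incidence. The main obstacle will be that a direct Cauchy--Schwarz over $A_\gamma$ costs $|A_\gamma|\sim R_\beta/R=R^{-\beta/(2(\beta+1))}$, whereas the target loss is only $1+R^{\beta/4}$. Closing this gap requires exploiting the near-disjointness of the thickened supports $\Sigma_1^\alpha+\Sigma_2^\beta$ as $(\alpha,\beta)$ traverses the one-parameter family in $A_\gamma$, so as to upgrade the triangle inequality to a square-function bound. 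It is the balance between this gain, the partition-boundary overlap error, and the $R^\beta$-size H\"older oscillation of the normals that pins down both the exponent $\beta/4$ and the threshold condition $R\geq(2^{40}\eps)^{2/(\beta+2)}$.
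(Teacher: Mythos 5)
Your overall architecture --- partition at scale $R$, verify that each sub-patch satisfies the weakened fixing condition \eqref{eq:fixa} so that $C(R_\beta,R,\eps)$ applies, then recombine by orthogonality --- is the same as the paper's, and your verification of \eqref{eq:fixa} is exactly right. But you have correctly located the crux and then not resolved it, and the fix you sketch cannot work. Any \emph{fixed} partition of the surfaces into $R$-cells leaves adjacent cells whose sums genuinely overlap on $\Sigma_3(\eps)$, so the best square-function inequality you can hope for has some absolute constant $C_0>1$ (a bounded-overlap constant like $9$ or $27$), and likewise your bounded-overlap covering of $\Sigma_3(\eps)$ by the $\Sigma_3^\gamma(\eps)$ already costs a constant $>1$. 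This is fatal here: the proposition is iterated infinitely many times in the proof of Proposition \ref{prop:eps_thick}, so the per-step constant must be $1+O(R^{\beta/4})$, not merely $O(1)$ --- a loss of any fixed $C_0>1$ per step compounds to $C_0^N\to\infty$ (this is precisely the $R^\eps$ loss of Bennett--Carbery--Tao that the paper is designed to avoid). ``Near-disjointness of thickened supports'' does not upgrade to an exact square function for a partition chosen independently of the data.

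The missing idea is to make the partition \emph{adaptive} and the orthogonality \emph{exact}. The paper first symmetrizes to the triple convolution $(f_1\ast f_2\ast f_3)(0)$ and notes that, after translation, each $\Sigma_i$ lies in a slab of thickness $r_\beta=2^{-39}R^{(\beta+2)/2}\ll R$ in its own coordinate direction (this is where the hypothesis $(2^{40}\eps)^{2/(\beta+2)}\leq R$ enters). It then chooses the cut locations $s_k$, spaced between $R/2$ and $R$, by pigeonholing among the $\sim 2^{36}R^{-\beta/2}$ candidate thin slabs of thickness $2r_\beta$ inside each $R$-interval, so that the boundary slabs carry at most a $2^{-17}R^{\beta/4}$ fraction of the $L^2$ mass of $f_1$ and $f_2$ (this is where the exponent $\beta/4$ comes from: it is half of the pigeonhole gain $R^{\beta/2}$). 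With the interior pieces separated by gaps of width $2r_\beta$ and the complementary surface confined to a slab of width $r_\beta$, the triple convolution of interior pieces \emph{vanishes identically} unless all indices match, so the main term is summed with constant exactly $1$ via the discrete Loomis--Whitney bound $\sum_{k_1,k_2,k_3}a_{k_2k_3}b_{k_1k_3}c_{k_1k_2}\leq\|a\|_{\ell^2}\|b\|_{\ell^2}\|c\|_{\ell^2}$, while the boundary pieces contribute an error of size $O(R^{\beta/4})$. Without this adaptive selection and the exact vanishing, your incidence-counting route cannot reach the constant $1+R^{\beta/4}$.
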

\begin{proof}We split the proof in five steps.

  {\it Step 1. (Symmetrization)} The numbers $C(R_\beta,\eps)$ and
  $C(R, R_\beta, \eps)$ are defined by \eqref{eq:eps} (with the
  additional constraints on $\Sigma_i$).  That formula has the
  disadvantage of not revealing the symmetry of the problem with
  respect to the role of the three surfaces. However, \eqref{eq:eps}
  is equivalent to
  \[
  \langle f_1 \ast f_2,f_3\rangle \leq C \sqrt{\eps}\|f_1
  \|_{L^2(\Sigma_{1})}\|f_2\|_{L^2(\Sigma_{2})} \| f_3
  \|_{L^2(\Sigma_{3}(\eps))}
  \]
  for all $f_3\in L^2(\R^3)$. Upon replacing $f_3$ by $f_3(-\cdot)$
  and $\Sigma_3$ by $-\Sigma_3$ and considering smooth, compactly
  supported $f_3$, this coincides with the triple convolution of the
  distributions $f_1$, $f_2$ with $f_3$ at zero, i.e.
  \begin{equation} \label{sym} ( f_1 \ast f_2 \ast f_3 ) (0) \leq
    C(R,\eps) \sqrt{\eps}\|f_1
    \|_{L^2(\Sigma_{1})}\|f_2\|_{L^2(\Sigma_{2})} \| f_3
    \|_{L^2(\Sigma_{3}(\eps))}.
  \end{equation}
  By density this is an equivalent definition of $C(R,\eps)$.  Since
  \eqref{sym} is symmetric\footnote{up to the thickening of
    $\Sigma_3$, which is irrelevant at this scale.} in $\Sigma_i$ we
  prefer to use this as a definition of $C(R,\eps)$.  In a similar way
  we symmetrize the definition of $C(R_\beta, R, \eps)$.

  {\it Step 2. (Reduction of scales)} From now on we assume that
  $\Sigma_i$ are defined as in \eqref{eq:surface_res} where $\Omega_i$
  are cubes of size $R_\beta$.  By translation in the coordinate
  directions we may assume that $\phi_i(a_i^0)=0$, $i=1,2,3$. From
  \eqref{s1bnew} it follows for $r_\beta:= 2^{-39}
  R^{\frac{\beta+2}{2}}$ that
  \begin{equation} \label{inclu} \Sigma_i \subset \{(x_1,x_2,x_3) \mid
    |x_i| \leq \frac{r_\beta}2 \} \subset \{ (x_1,x_2,x_3) \mid |x_i|
    \leq \frac{R}{8} \}.
  \end{equation}
  From $(2^{40}\eps)^{\frac2{\beta+2}} \leq R $ it follows that $\eps
  \leq \frac{r_\beta}2$, therefore
  \begin{equation} \label{inclu5} \Sigma_{3}(\eps) \subset
    \{(x_1,x_2,x_3) \mid |x_3| \leq r_\beta \} \subset \{(x_1,x_2,x_3)
    \mid |x_3| \leq \frac{R}{4} \}.
  \end{equation}
  Hence we are dealing with three scales ordered as follow:
  \[
  r_\beta \leq R \leq R_\beta.
  \]

  {\it Step 3. (Decomposition of the surfaces)} Inspired by
  \eqref{inclu5} we make the following construction.  We will
  recursively construct an increasing sequence\footnote{Notice that
    the sequence itself may depend on the functions $f_1$ and $f_2$,
    but the final bound will not depend on this sequence.}  $(s_k)_{k
    \geq 1}$ with the properties
  \begin{equation}
    s_{k+1} \in [s_k+\frac12 R,s_k +R]\label{eq:claimA}
  \end{equation}
  and
  \begin{equation}
    \begin{split}
      &\| f_1 \|_{L^2(\Sigma_1 \cap \{|x_3 -s_{k+1}|\leq r_\beta\})}
      \leq 2^{-17} R^{\frac\beta{4}}\|f_1 \|_{L^2 (\Sigma_1 \cap \{\frac12 R \leq x_3-s_k \leq R\})},\\
      &\| f_2 \|_{L^2(\Sigma_2 \cap \{|x_3 -s_{k+1}|\leq r_\beta\})}
      \leq 2^{-17} R^{\frac\beta{4}} \|f_2 \|_{L^2 (\Sigma_2 \cap
        \{\frac12 R \leq x_3-s_k \leq R\})}.
    \end{split}
    \label{eq:claimB}
  \end{equation}
  In order to do so, we set $s_1 = a - r_\beta$ for a number $a$ such
  that $f_1$ and $f_2$ have support in the slab $\R^2 \times [a,b]$.
  Then \eqref{eq:claimB} is trivially verified for $k=1$.  Assume we
  already have constructed $s_k$ for some $k\geq 1$.  The set
  $\{\frac12 R \leq x_3-s_k \leq R\}\subset \R^3$ comprises $m$ slabs
  $I_1,\ldots I_m$ of thickness (in the third coordinate direction)
  $2r_\beta$, where $m$ denotes the largest integer which is smaller
  or equal to the ratio $\frac{\frac12 R}{2r_\beta}$.  For the
  function
  \begin{equation*}
    \alpha_i=\|f_i\|^{-2}_{L^2 (\Sigma_i \cap \{\frac12 R \leq x_3-s_k \leq R\})}f_i^2
  \end{equation*}
  it follows that
  \begin{align*}
    2
    &\geq \sum_{l=1}^m\left(\int_{I_l\cap \Sigma_1} \alpha_1 d\mu_1 +\int_{I_l\cap \Sigma_2} \alpha_2 d\mu_2\right)\\
    &\geq m \min_{l=1,\ldots,m} \left(\int_{I_l\cap \Sigma_1} \alpha_1
      d\mu_1 +\int_{I_l\cap \Sigma_2} \alpha_2 d\mu_2 \right).
  \end{align*}
  This estimate implies that there exists $I_{l^*}$ such that
  \begin{equation*}
    \int_{I_{l^*} \cap \Sigma_i} f_i^2 d\mu_1 \leq \frac{2}{m}\|f_i\|^2_{L^2 (\Sigma_i \cap \{\frac12 R \leq x_3-s_k \leq R\})}\; , i=1,2.
  \end{equation*}
  We choose $s_{k+1}$ to be the center of $I_{l^*}$, which
  satisfies\eqref{eq:claimA} because $I_{l^*}\subset [s_k+\frac12 R,
  s_k + R]$ and \eqref{eq:claimB} because $m^{-1}\leq 2^{-36}
  R^{\frac{\beta}{2}}$.

  For this sequence we define
  \begin{align*}
    \Sigma_{i}[k, \mathfrak{e}_3] &=\Sigma_{i} \cap \{ (x_1,x_2,x_3) \mid (-1)^i x_3 \in [ s_k+r_\beta, s_{k+1}-r_\beta] \},\\
    \tilde{\Sigma}_{i}[k, \mathfrak{e}_3]& =\Sigma_{i} \cap \{
    (x_1,x_2,x_3) \mid (-1)^i x_3 \in [s_k-r_\beta,s_k+r_\beta] \}.
  \end{align*}
  With this notation it follows that
  \begin{align}
    \label{cond1} & s_k + \frac12 R  \leq s_{k+1}  \leq s_k+  R\\
    \label{cond2} &\| f_i \|_{L^2(\tilde{\Sigma}_{i}[k,\mathfrak{e}_3
      ])} \leq 2^{-17} R^{\frac\beta4} \| f_i \|_{L^2(\Sigma_{i}[k-1,
      \mathfrak{e}_3]\cup \tilde{\Sigma}_{i}[k, \mathfrak{e}_3])\cup
      \Sigma_{i}[k, \mathfrak{e}_3])}\, , i=1,2.
  \end{align}
  Since \eqref{inclu5} has the (more restrictive) analog \eqref{inclu}
  in all directions, we can perform a similar construction to define
  $\Sigma_i[k,\mathfrak{e}_1]$ and
  $\tilde{\Sigma}_i[k,\mathfrak{e}_1]$ for $i=2,3$ as well as
  $\Sigma_i[k,\mathfrak{e}_2]$ and
  $\tilde{\Sigma}_i[k,\mathfrak{e}_2]$ for $i=1,3$ with the same
  properties \eqref{cond1} and \eqref{cond2}.  Moreover, we introduce
  \begin{align*}
    \Sigma_{1}^{k_2,k_3}={}&\Sigma_1[k_2,\mathfrak{e}_2] \cap \Sigma_1[k_3,\mathfrak{e}_3],\\
    \tilde{\Sigma}_{1}^{k_2,k_3}={}&( \tilde{\Sigma}_1[k_2,\mathfrak{e}_2] \cap \Sigma_1[k_3,\mathfrak{e}_3] ) \cup ( \Sigma_1[k_2,\mathfrak{e}_2] \cap \tilde{\Sigma}_1[k_3,\mathfrak{e}_3] )\\
    &\cup ( \tilde{\Sigma}_1[k_2,\mathfrak{e}_2] \cap
    \tilde{\Sigma}_1[k_3,\mathfrak{e}_3] ),
  \end{align*}
  and similarly $\Sigma_2^{k_1,k_3}$, $\tilde{\Sigma}_2^{k_1,k_3}$ and
  $\Sigma_3^{k_1,k_2}(\eps)$, $\tilde{\Sigma}_3^{k_1,k_2}(\eps)$.
  Now, we have the decompositions
  \begin{equation*}
    \Sigma_i=\bigcup_{k,l} \Sigma_i^{k,l}\cup \tilde{\Sigma}_i^{k,l}, \, i=1,2,
  \end{equation*}
  and the same for $\Sigma_3(\eps)$.

  {\it Step 4. (Properties of the new sets)} In this step we collect
  three useful facts about our new sets.

  a) Diameter: From \eqref{cond1} it follows that
  $\Sigma_{1}^{k_2,k_3}$, is generated as in \eqref{eq:surface_res} by
  $\Omega_1^{k_2,k_3} \subset C$, where $C$ is a cube of size $R$. In
  addition, since $\Omega_1^{k_2,k_3} \subset \Omega_1$, it follows
  that at the center $c_0$ of $C$ we have an estimate of type
  \eqref{eq:fixa}, namely
  \[
  |\nabla \phi_1(c_0)| \leq 2^{-40} (\sqrt{3} R_\beta)^\beta.
  \]
  A similar characterization holds true for
  $\tilde{\Sigma}_1^{k_2,k_3}, \Sigma_2^{k_1,k_3}$, etc.  This
  basically says that if in \eqref{sym} we replace each $\Sigma_i$ by
  $\Sigma_i^{k_l,k_m}$ or $\tilde{\Sigma}_i^{k_l,k_m}$, then the
  constant should be adjusted to $C(R_\beta,R,\eps)$.
  
  b) Orthogonality: The reason to introduce the decompositions from
  the previous step is to apply almost orthogonality arguments. More
  exactly we claim that
  \begin{equation} \label{orthog} (f_1 |_{\Sigma_{1}^{k_2,k_3}} \ast
    f_2|_{\Sigma_{2}^{k_1,k_3'}} \ast
    f_3|_{\Sigma_{3}^{k_1',k_2'}(\eps)})(0)=0
  \end{equation}
  unless $k_i=k_i'$ for $i=1,2,3$.

  Indeed, by definition of $\Sigma_1^{k_2,k_3}$ and
  $\Sigma_2^{k_1,k_3'}$ we have
  \begin{align*}
    &\supp(f_1 |_{\Sigma_{1}^{k_2,k_3}}  \ast f_2|_{\Sigma_{2}^{k_1,k_3'}}) \\
    \subset & \{(x_1,x_2,x_3)\mid x_3 \in
    [s_{k_3'}-s_{k_3+1}+2r_\beta,s_{k_3'+1}-s_{k_3}-2r_\beta] \}.
  \end{align*}
  For the left hand side of \eqref{orthog} to be different from zero
  it is necessary that
  \begin{equation*}
    [s_{k_3'}-s_{k_3+1}+2r_\beta,s_{k_3'+1}-s_{k_3}-2r_\beta]\cap
    [-r_\beta,r_\beta] \not=0.
  \end{equation*}
  due to \eqref{inclu5}, which leads to $k_3=k_3'$ because $(s_k)$ is
  strictly increasing.  In a similar manner it follows that $k_i=k_i'$
  for $i=1,2$.

  A similar argument, using the properties of $(s_k)_{k\geq 1}$,
  provides that if one allows in \eqref{orthog} one or more of the
  $\Sigma_i$ to be replaced by the corresponding set
  $\tilde{\Sigma}_i$, the convolution is zero unless $|k_i-k_i'| \leq
  1$.

  c) Smallness on $\tilde{\Sigma}$: The lack of perfect orthogonality
  in \eqref{orthog} when $\tilde{\Sigma}$'s are involved is
  compensated by the following smallness of mass on those sets
  \begin{equation} \label{small} \left(\sum_{k,l} \| f_i
      \|_{L^2(\tilde{\Sigma}_i^{k,l})}^2 \right)^\frac12 \leq 2^{-16}
    R^{\frac{\beta}4} \|f_i\|_{L^2(\Sigma_i)}
  \end{equation}
  We prove \eqref{small} for $i=1$ since the other cases are similar.
  From the definitions of the sets we have the straightforward
  estimate
  \begin{equation*}
    \sum_{k_2,k_3} \| f_1 \|_{L^2(\tilde{\Sigma}_1^{k_2,k_3})}^2 \leq \sum_{k_2} \| f_1 \|_{L^2(\tilde{\Sigma}_1[k_2,\mathfrak{e}_2])}^2 + \sum_{k_3} \| f_1 \|_{L^2(\tilde{\Sigma}_1[k_3,\mathfrak{e}_3])}^2
  \end{equation*}
  Then one uses \eqref{cond2} and the analog of \eqref{cond2} for
  $\tilde{\Sigma}_1[k_2,\mathfrak{e}_2]$ to estimate each term by
  $2^{-34} R^{\frac{\beta}{2}}\|f_1\|_{L^2(\Sigma_1)}^2$ and obtains
  \eqref{small}.

  {\it Step 5. (Conclusion of the proof)} Based on the above analysis
  on sets we decompose
  
  \begin{equation}\label{eq:decsum}
    ( f_1|_{\Sigma_{1}} \ast f_2 |_{\Sigma_{2}} \ast f_3|_{\Sigma_{3}(\eps)} )(0)
    = S+T
  \end{equation}
  where
  \begin{equation*}
    S=\sum_{k_1,k_2,k_3,k_1',k_2',k_3'}
    f_1 |_{\Sigma_{1}^{k_2,k_3}} \ast f_2|_{\Sigma_{2}^{k_1,k_3'}} \ast f_3|_{\Sigma_{3}^{k_1',k_2'}(\eps)}(0)
  \end{equation*}
  where the remainder $T$ contains $7$ sums of the same type as $S$,
  except that one ($3$ cases), two ($3$ cases) or all three ($1$ case)
  $\Sigma$ are replaced by $\tilde{\Sigma}$. We decompose as
  \begin{equation}\label{eq:remainder}
    T=T_1+T_2+T_3+T_{12}+T_{13}+T_{23}+T_{123}
  \end{equation}
  where the subscripts indicate the positions of the $\tilde{\Sigma}$
  factors.

  On behalf of the orthogonality relation \eqref{orthog} we observe
  that
  \begin{equation*}
    S= \sum_{k_1,k_2,k_3} f_1 |_{\Sigma_{1}^{k_2,k_3}} \ast f_2|_{\Sigma_{2}^{k_1,k_3}} \ast f_3|_{\Sigma_{3}^{k_1,k_2}(\eps)}(0).
  \end{equation*}

  From the conclusions in Step 4 a) above, we obtain
  \begin{align*}
    S&\leq C(R_\beta, R, \eps) \sum_{k_1,k_2,k_3} \| f_1 \|_{L^2(\Sigma_{1}^{k_2,k_3})} \| f_2 \|_{L^2(\Sigma_{2}^{k_1,k_3})}  \| f_3 \|_{L^2(\Sigma_{3}^{k_1,k_2}(\eps))} \\
    &\leq C(R_\beta, R , \eps) \| f_1 \|_{L^2(\Sigma_1)} \| f_2
    \|_{L^2(\Sigma_2)} \| f_3 \|_{L^2(\Sigma_3(\eps))}
  \end{align*}
  In passing to the last line we have used the Cauchy-Schwartz
  inequality with respect to all three $k_i$'s and the fact that the
  sets $\Sigma_1^{k_2,k_3}$ are disjoint with respect to the pair
  $(k_2,k_3)$ (and the same for the sets $\Sigma_2^{k_1,k_3}$,
  $\Sigma_3^{k_1,k_2}$).

  For each term in the remainder the same argument applies up to the
  orthogonality issue.  By the almost orthogonality the first term in
  the remainder is given by
  \begin{equation} \label{generic} T_1 =\sum_\ast \left(f_1
      |_{\tilde{\Sigma}_{1}^{k_2,k_3}} \ast
      f_2|_{\Sigma_{2}^{k_1,k_3'}} \ast
      f_3|_{\Sigma_{3}^{k_1',k_2'}(\eps)} \right)(0)
  \end{equation}
  where $\ast$ indicates summation with respect to
  $k_1,k_2,k_3,k_1',k_2',k_3'$ satisfying $|k_i-k_i'| \leq 1$ for
  $i=1,2,3$.  We obtain
  \begin{align*}
    T_1&\leq C(R_\beta, R, \eps) \sum_{\ast} \| f_1 \|_{L^2(\tilde{\Sigma}_{1}^{k_2,k_3})} \| f_2 \|_{L^2(\Sigma_{2}^{k_1,k_3'})}  \| f_3 \|_{L^2(\Sigma_{3}^{k_1',k_2'}(\eps))} \\
    &\leq 27 C(R_\beta, R, \eps) \left( \sum_{k_2,k_3} \| f_1 \|^2_{L^2(\tilde{\Sigma}_1^{k_2,k_3})}  \right)^\frac12 \| f_2 \|_{L^2(\Sigma_2)} \| f_3 \|_{L^2(\Sigma_3(\eps))} \\
    &\leq 2^{-11} R^\frac\beta4 C(R_\beta, R, \eps) \| f_1
    \|_{L^2(\Sigma_1)} \| f_2 \|_{L^2(\Sigma_2)} \| f_3
    \|_{L^2(\Sigma_3(\eps))}.
  \end{align*}
  \noindent
  where we have used \eqref{small} in passing to the last line.  If
  one considers the remaining terms in \eqref{eq:remainder} the same
  estimate holds true, which amounts to
  \begin{equation*}
    T \leq  R^\frac\beta4  C(R_\beta, R, \eps) \| f_1 \|_{L^2(\Sigma_1)} \| f_2 \|_{L^2(\Sigma_2)} \| f_3 \|_{L^2(\Sigma_3(\eps))}
  \end{equation*} 
  This estimate for the remainder term $T$ together with the estimate
  for the main term $S$ and \eqref{eq:decsum} leads to
  \eqref{indest1}.
\end{proof}

As a corollary we obtain
\begin{cor}
  Under the assumption $(2^{40}\eps)^{\frac2{\beta+2}} \leq R$ the
  estimate
  \begin{equation} \label{indest} C(R_\beta,\eps) \leq (1+
    R^{\frac\beta4}_\beta )^2 C((1+R_\beta^\beta)^{1+\frac1\beta}R,
    \eps (1+R^\beta_\beta)^{1+\frac1\beta})
  \end{equation}
  holds true, where $R_\beta$ is defined as
  $R_\beta=R^{\frac{\beta+2}{2(\beta+1)}}$.
\end{cor}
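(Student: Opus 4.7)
The corollary looks like a direct consequence of chaining the preceding Proposition \ref{mainprop} with the earlier scaling Lemma \eqref{general}, followed by a clean arithmetic simplification using $R \leq R_\beta \leq 1$. The plan is as follows.

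First, I would apply Proposition \ref{mainprop} directly, using the hypothesis $(2^{40}\eps)^{2/(\beta+2)} \leq R$, to obtain
\[
C(R_\beta,\eps) \leq (1+R^{\beta/4}) C(R_\beta, R, \eps).
\]
This reduces the problem to estimating the mixed-scale constant $C(R_\beta, R, \eps)$ in terms of a single-scale constant.

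Next, I would invoke the scaling Lemma \eqref{general} with $R$ replaced by $R_\beta$ (as the ``outer'' scale) and $r$ replaced by $R$ (as the ``inner'' scale), yielding
\[
C(R_\beta, R, \eps) \leq (1+R_\beta^{\beta}) C\bigl(R(1+R_\beta^\beta)^{1+1/\beta},\ \eps(1+R_\beta^\beta)^{1+1/\beta}\bigr).
\]
Chaining this with the previous bound produces the desired form of the right-hand side in \eqref{indest}, modulo the prefactor $(1+R^{\beta/4})(1+R_\beta^\beta)$ in place of $(1+R_\beta^{\beta/4})^2$.

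Finally, I need to show $(1+R^{\beta/4})(1+R_\beta^\beta) \leq (1+R_\beta^{\beta/4})^2$. Since $R,R_\beta \in (0,1]$ and the exponent $\tfrac{\beta+2}{2(\beta+1)} \in (\tfrac12,1)$, we have $R \leq R_\beta \leq 1$; hence $R^{\beta/4} \leq R_\beta^{\beta/4}$ and also $R_\beta^{\beta} \leq R_\beta^{\beta/4}$ (as $\beta \geq \beta/4$ and $R_\beta \leq 1$). Expanding $(1+R_\beta^{\beta/4})^2 = 1 + 2R_\beta^{\beta/4} + R_\beta^{\beta/2}$ and $(1+R^{\beta/4})(1+R_\beta^\beta) = 1 + R^{\beta/4} + R_\beta^\beta + R^{\beta/4}R_\beta^\beta$, the inequality reduces to the term-by-term comparisons $R^{\beta/4} + R_\beta^\beta \leq 2R_\beta^{\beta/4}$ and $R^{\beta/4}R_\beta^\beta \leq R_\beta^{\beta/2}$, both of which are immediate.

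Nothing here is substantive beyond bookkeeping; there is no genuine obstacle, since all the analytic content sits in Proposition \ref{mainprop} and Lemma \eqref{general}. The only care required is to track that the inner scales agree in both estimates (namely $R_\beta$ plays the role of the outer scale in the Lemma while $R$ remains the inner one), so that the compositions line up cleanly.
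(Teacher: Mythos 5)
Your proposal is correct and follows exactly the paper's route: the paper's proof is the one-line observation that the corollary is a direct consequence of \eqref{indest1} and \eqref{general}, applied with outer scale $R_\beta$ and inner scale $R$. Your additional verification that $(1+R^{\beta/4})(1+R_\beta^{\beta})\leq(1+R_\beta^{\beta/4})^2$ (using $R\leq R_\beta\leq 1$) correctly fills in the arithmetic the paper leaves implicit.
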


\begin{proof} This result is a direct consequence of \eqref{indest1}
  and \eqref{general}.
\end{proof}

We can now proceed with the proof of the result claimed in
\eqref{eq:eps_thick}.
\begin{proof}[Proof of Proposition \ref{prop:eps_thick}]
  We recursiveley define $R(k)$ as
  \[
  R(k+1)=
  R(k)^{\frac{2(\beta+1)}{\beta+2}}(1+R(k)^\beta)^{1+\frac1\beta}.
  \]
  A straightforward computation gives
  \begin{equation} \label{ratio} \frac{R(k+1)}{R(k)}=
    R(k)^{\frac{\beta}{\beta+2}}(1+R(k)^\beta)^{1+\frac1\beta}.
  \end{equation}
  Since the right-hand side is an increasing function in $R(k)$, one
  can choose $R(0)=C_\beta$ such that additionally
  \[
  R(0)^{\frac{\beta}{\beta+2}}(1+R(0)^\beta)^{1+\frac1\beta} \leq
  \frac12
  \]
  is satisfied.  With this choice the sequence $R(k)$ is strictly
  decreasing and $\lim_{k \rightarrow \infty} R(k)\leq \lim_{k\to
    \infty}2^{-k}=0$.  The result in \eqref{indest} provides
  \begin{equation}\label{rec}
    C(R(k),\eps) \leq  (1+R(k)^\frac\beta4)^2  C(R (k+1), \eps(1+R(k)^\beta)^{1+\frac1\beta}).
  \end{equation}
  However, in order to apply the above inequality we need to verify
  the required bounds. If
  \begin{equation*}
    \eps(k+1) =\eps \prod_{l=0}^{k-1} (1+ R(l)^\beta)^{1+\frac1\beta},
  \end{equation*}
  then the above formula can be used as long as $
  (2^{40}\eps(k))^{\frac2{\beta+2}} \leq R(k)$.  For $k=0$ one needs
  to verify $(2^{40}\eps)^{\frac2{\beta+2}} \leq C_\beta$ for which it
  is enough to take $\eps \leq
  \eps_\beta=2^{-40}C_\beta^{\frac{\beta+2}2}$.  For $k\geq 1$ we have
  that
  \[
  \eps(k)^{\frac2{\beta+2}} = \eps^{\frac2{\beta+2}}
  (\prod_{l=0}^{k-1} (1+R(l)^\beta))^{\frac2{\beta+2}(1+\frac1\beta)}
  \]
  is an increasing sequence, therefore we can find $N$ to be the
  highest value of $k$ with the property $ \eps(k)^{\frac2{\beta+2}}
  \leq R(k)$.

  Now we can use \eqref{rec} for all $k \leq N$ and by iterating it,
  we obtain
  \begin{equation} \label{ineq} C(R(0), \eps) \leq \prod_{k=0}^{N-1}
    (1+ R(k)^\frac\beta4)^2 C(R (N),\eps(N) ).
  \end{equation}
  Using \eqref{ratio} we estimate
  \begin{align*}
    \ln \prod_{k=0}^\infty (1+ R(k)^\beta)^{1+\frac1\beta} &\leq
    (1+\frac1\beta) \sum_{k=0}^\infty R(k)^\beta \leq (1+\frac1\beta)
    R(0)^\beta \sum_{k=0}^\infty (\frac{1}{2^\beta})^{k},
  \end{align*}
  which is less than $\ln(2)$ by making $C_\beta$ small enough, which
  shows that $\prod_{k=0}^\infty (1+ R(k)^\beta)^{1+\frac1\beta} \leq
  2$ and therefore $\eps(k) \leq 2 \eps$ for all $k \geq 0$.

  Since $R(N+1) \leq (2^{40}\eps(N+1))^{\frac2{\beta+2}} \leq
  (2^{41}\eps(N))^{\frac2{\beta+2}}$, it follows that $R(N) \leq
  2^{41} (\eps(N))^\frac{1}{\beta+1} \leq (2^{41}
  \eps(N))^\frac{1}{\beta+1}$.  Now we can apply the result in Lemma
  \ref{lem:start} and obtain
  \begin{equation} \label{N} C(R (N),\eps(N) ) \leq C(R (N), 2^{41}
    \eps(N) ) \leq\sqrt{2^{41}\eps(N)} \leq 2^{21} \sqrt{\eps}.
  \end{equation}
  In a similar manner as above we obtain
  $$
  \ln \prod_{k=0}^\infty (1+R(k)^\frac\beta4)^2 \leq 1
  $$
  at the expense of adjusting $C_\beta$ again. The last estimate
  together with \eqref{ineq} and \eqref{N} proves that
  \[
  C(R(0), \eps) \leq C \sqrt{\eps}
  \]
  for all $\eps \leq \eps_\beta$. From this the claim in Proposition
  \ref{prop:eps_thick} follows by partitioning each $\Sigma_i$ into a
  finite number of pieces of diameter less than $R(0)$ and, in
  addition, by partitioning $\Sigma_3(\eps)$ into a finite number of
  pieces $\Sigma_3'(\eps_\beta)$ where $\Sigma_3'$ are translates of
  $\Sigma_3$ in the $z$-direction.
\end{proof}

\subsection*{Acknowledgements}
The authors are grateful to Terence Tao for pointing out the references \cite{MR2275834,MR2155223},
and to Anthony Carbery for his helpful comments on an earlier version of this paper.

I.B. was supported in part by NSF grant DMS0738442, S.H. acknowledges support from NSF grant
DMS0354539 and D.T. was supported in part by NSF grant DMS0801261.
\bibliographystyle{plain}

\end{document}